\def\makeautorefname#1#2{\expandafter\def\csname#1autorefname\endcsname{#2}}
\def\equationautorefname~#1\null{(#1)\null}
\newtheorem{thm}{Theorem}[section]
\newtheorem{prop}{Proposition}[section]
\newtheorem{lem}{Lemma}[section]
\theoremstyle{definition}
\newtheorem{defn}{Definition}[section]
\newtheorem{exmp}{Example}[section]
\newtheorem{notn}{Notation}[section]
\let\c@obs=\c@thm
\let\c@cor=\c@thm
\let\c@prop=\c@thm
\let\c@lem=\c@thm
\let\c@prob=\c@thm
\let\c@con=\c@thm
\let\c@conj=\c@thm
\let\c@defn=\c@thm
\let\c@notn=\c@thm
\let\c@notns=\c@thm
\let\c@exmp=\c@thm
\let\c@ax=\c@thm
\let\c@pro=\c@thm
\let\c@ass=\c@thm
\let\c@warn=\c@thm
\let\c@rem=\c@thm
\let\c@sch=\c@thm
\let\c@equation\c@thm
\numberwithin{equation}{section}
\title{The Landscape of Large Cardinals}
\author{Rohan Srivastava}
\date{Submitted August 31, 2021 as part of the Freiwald Scholars Summer REU Program at Washington University in St. Louis. Revised: September 27, 2021, February 12, 2022, April 25, 2022.}
\begin{document}

\begin{abstract}

The purpose of this paper is to provide an introductory overview of the large cardinal hierarchy in set theory. By a large cardinal, we mean any cardinal $\kappa$ whose existence is strong enough of an assumption to prove the consistency of ZFC. We assume basic familiarity with set theory, model theory, and the ZFC axioms, though certain concepts will be reviewed as necessary. We attempt to clarify the vast landscape of the higher infinite, motivating the definitions of some (but certainly not all) of the known large cardinals. We also discuss connections with the Continuum Hypothesis and provide some philosophical reflections on belief in the consistency of large cardinals.

\end{abstract}

\maketitle

\tableofcontents

\textit{``How many nights does it take to count the stars? If I tried, I know it would feel like $\omega$." -Harry Styles}

\section{Introduction}  
\subsection{ZFC preliminaries}
Set theory serves as an ontological and epistemological foundation for all of mathematics, but it is also a branch of mathematics in its own right. It begins with, and is the study of, a particular mathematical object: the von Neumann universe, stratified by the \textit{cumulative hierarchy} of sets. We denote the set-theoretic universe by V, and define it by transfinite recursion as follows:

$V_0 = \emptyset$

$V_{\alpha + 1} = \mathcal{P}(V_{\alpha})$

$V_{\beta} = \cup\{V_{\alpha} : {\alpha < \beta}\},$ for limit ordinals $\beta.$

$V=\cup\{V_\alpha : \alpha\in ON\},$ where ON denotes the class of all ordinals

For thoroughness, we also briefly review the ZFC axioms of set theory, stating a few formally and the others only informally.
\newline

\noindent Axiom 1. Extensionality.
\begin{itemize}
    \item $\forall z(z\in x \leftrightarrow z\in y) \rightarrow x=y$. \footnote{That is, sets are defined by their elements. Note that in our notation all free variables (i.e., those not bounded by quantifiers, such as x and y here) are implicitly taken to be universally quantified.}
\end{itemize}
 Axiom 2. Foundation.
\begin{itemize}
    \item $\exists y(y\in x) \rightarrow \exists y(y\in x \wedge \neg\exists z(z\in x \wedge z\in y))$
\end{itemize}
 Axiom 3. Comprehension Scheme.\footnote{Note that this is not a single axiom but an infinite schema of axioms, one corresponding to each formula. As something of a curiosity, the Comprehension scheme actually can be replaced by finitely many instances of Comprehension to obtain an equivalent theory. However, this does not hold true of the other infinite axiom schema in ZFC, Replacement, and thus ZFC is not a finitely axiomatizable theory.}
\begin{itemize}
    \item For each formula $\varphi$ without y free,
    \newline
    $\exists y\forall x(x\in y \leftrightarrow x\in v \wedge \varphi(x))$
\end{itemize}
 Axiom 4. Pairing. 
\begin{itemize}
    \item For any two sets A and B, there exists a set C containing exactly A and B as members.
\end{itemize}
 Axiom 5. Union. 
\begin{itemize}
    \item For any given set A, there exists a set containing exactly the elements of elements of A.
\end{itemize}
 Axiom 6. Replacement Scheme.
\begin{itemize}
    \item The image of any set under any (definable) mapping is a set.
\end{itemize}
 Axiom 7. Infinity. 
\begin{itemize}
    \item There exists some infinite set, namely $\omega$.
\end{itemize}
 Axiom 8. Power Set. 
\begin{itemize}
    \item Given a set A, there exists a set whose elements are all the subsets of A.\footnote{Or, perhaps more naturally stated, the ``set of all subsets" is indeed a set.}
\end{itemize}
 Axiom 9. Choice. 
\begin{itemize}
    \item Given any collection X of non-empty sets, there exists a \textit{choice function} on X mapping each set of X to one of its elements.
\end{itemize}

Historically speaking, Zermelo backed out an early formulation of the ZFC axioms in 1908 from the implicit hypotheses assumed in his proof of the Well-Ordering Theorem. This predates the conception of their canonical model, V, which was described in 1930. The relationship between the ZFC axioms and the the von Neumann universe is murky however, and philosophies differ on this. Platonists will contend that the von Neumann universe is something concrete; certain things (e.g. the Continuum Hypothesis) hold true of it and others are false of it. On this view, the truth-value of the Continuum Hypothesis can be resolved by adding new axioms. On the other hand, those with formalist tendencies may contend that the description of the von Neumann universe is inherently vague; we know that $V\models ZFC$, but we do not, for instance, really know what the continuum $2^{\aleph_0}$ is because there are many things in it that we can never even describe in a finite language.

\subsection{Reconciling completeness with incompleteness (the Dez Bryant section)}

In set theory, we associate ``existence" with ``being a set." Note that in our ontology then V is a \textit{proper class} (i.e., not a set), but informally we may think of it as the structure consisting of all sets.\footnote{It is easily seen as a consequence of Russell's paradox (or even of Cantor's theorem) that V cannot itself be a set.} Godel's Completeness Theorem states that if a sentence is true in all models of a first order theory, then it can be proven; an equivalent formulation is that if a theory is consistent, then it has a model. But this model must provably ``exist" (i.e., be a set), so this does not contradict the fact that $V\models ZFC$. However, by restricting our attention to certain stages $V_\lambda$ of the cumulative hierarchy, we can obtain certain nice models which are sets. For instance, $V_{\omega}$, the set of all hereditarily finite sets (sets whose elements are finite, and elements of elements are finite, and so on), is in fact a model of all the ZFC axioms except for the Axiom of Infinity. Similarly, $V_{\omega+\omega}$ is a model of all of the ZFC axioms except for Replacement. Unfortunately, by Gödel's Second Incompleteness Theorem,\footnote{Gödel's Second Incompleteness Theorem says, roughly, that any first-order theory with a decidable (i.e., recognizable and co-recognizable by a Turing machine) set of axioms sufficient to prove arithmetical statements cannot prove its own consistency.} we can never prove that any initial segment $V_\kappa$ of the set-theoretic universe can model all of ZFC (or else ZFC, by proving $V_\kappa$ exists as a set, would prove its own consistency). Thus large cardinal axioms, postulating the existence of large cardinals $\kappa$ (typically with the property that $V_\kappa\models ZFC$\footnote{By the definition we take here of a large cardinal, a large cardinal $\kappa$ need only have the property that its existence is strong enough of an assumption (along with the rest of ZFC) to prove the consistency of ZFC. In practice, all such large cardinals have the property that $V_\kappa\models ZFC$ so some texts choose instead to treat this as the definition of a large cardinal.}), hold a special epistemic status: they are (provably) not provable, but they are falsifiable.\footnote{By analogy, large cardinal axioms may be thought of as ``strong axioms of infinity."} In other words, there is no hope of finding a proof that any given large cardinal is consistent, but it may be that we find a proof that the existence of some kind of large cardinal leads to a contradiction. Nevertheless, few working set theorists worry too much about this, for reasons that we will discuss later.

We shall now begin to define some large cardinal notions, beginning with the most elementary.

\begin{defn}  A cardinal $\kappa$ is \textbf{weakly inaccessible} if $\kappa > \omega$, $\kappa$ is regular, and $\kappa > \lambda^+$ for all $\lambda < \kappa$. 
\end{defn}

In other words, weakly inaccessible is equivalent to being a regular limit cardinal, and implies being a fixed point of the Aleph function; that is, $\kappa = \aleph_\kappa$. (We recall that a \textit{regular cardinal} is one who cofinality is equal to itself and a \textit{limit cardinal} is a cardinal $\aleph_\lambda$ such that $\lambda$ is a limit ordinal.)

\begin{defn} A cardinal $\kappa$ is \textbf{strongly inaccessible} if $\kappa$ is weakly inaccessible and also satisfies the property: if $\kappa > \lambda$ then $\kappa > 2^\lambda$.

\end{defn}

Thus strongly inaccessible cardinals are unreachable even by  the power set operation. Note that assuming the Generalized Continuum Hypothesis (that $2^{\lambda} = \lambda^+$ for all cardinals $\lambda$), the notions of strong and weak inaccessibility coincide. Regardless, strong inaccessibility is the preferred notion, so by an ``inaccessible" cardinal we shall usually mean a strongly inaccessible cardinal. It is also worth noting that Cohen's method of \textit{forcing}, first used to prove the consistency of the negation of the Continuum Hypothesis with the ZFC axioms,\footnote{In conjunction with Gödel's construction of L and proof that L satisfies CH, this established the independence of CH from the ZFC axioms. In fact, forcing can be used to establish the consistency of CH as well as $\neg$CH.} is so robust that it can be used even to establish that the continuum $2^{\aleph_0}$ may be a weakly inaccessible cardinal.

Inaccessible cardinals can be thought of as sitting ``on top" of the cumulative hierarchy, and in particular they are large cardinals: $V_\kappa \models ZFC$ for any inaccessible cardinal $\kappa$. But if you believe in the consistency of ZFC (as we all seem to, given that it serves as a foundation for all of mathematics), it is really not too much stronger to postulate the existence of some inaccessible cardinal.\footnote{Technically, though, the existence of an inaccessible cardinal is a strictly stronger assumption than the existence of a transitive model of ZFC, which is in turn a strictly stronger assumption than the consistency of ZFC (i.e. the existence of any model of ZFC)} The same cannot be said for some of the stronger large cardinal hypotheses which we shall see.

\section{Small large cardinals and the constructible universe}

We can generalize the notion of inaccessibility in the following manner:

\begin{defn}
\item $\kappa$ is \textit{0-weakly inaccessible} iff $\kappa$ is a regular limit cardinal.

\item $\kappa$ is \textit{($\alpha$ + 1)-weakly inaccessible} iff $\kappa$ is a regular limit of $\alpha$-weakly inaccessible cardinals.

\item For limit ordinals $\delta$, $\kappa$ is \textit{$\delta$-weakly inaccessible} iff $\kappa$ is $\alpha$-weakly inaccessible for every $\alpha<\delta$.
\end{defn}

The \textbf{Mahlo hierarchy} is defined analogously, but first we need two preliminary definitions:

\begin{defn}
A subset S of a limit ordinal $\delta$ is a \textbf{stationary} set (in $\delta$) if it has non-empty intersection with every \textbf{club} (closed unbounded) set in $\delta$.
\end{defn}

\begin{defn}

\item $\kappa$ is \textit{0-weakly Mahlo} iff $\kappa$ is regular.
    
\item $\kappa$ is \textit{$(\alpha + 1)$-weakly Mahlo} iff $\{\xi < \kappa : \xi$ is $\alpha$-weakly Mahlo$\}$ is stationary in $\kappa$.

\item For limit ordinals $\delta$, $\kappa$ is \textit{$\delta$-weakly Mahlo} iff $\kappa$ is $\alpha$-weakly Mahlo for every $\alpha < \delta$.

\item $\kappa$ is \textbf{strongly Mahlo} (or simply Mahlo) iff $\{\alpha < \kappa : \alpha $ is inaccessible$\}$ is stationary in $\kappa$.

\end{defn}

Note that being a stationary set is a notion of ``largeness," so being Mahlo entails being ``so large" that the number of inaccessibles below is itself ``large." This is remarkable given that postulating the existence of a single additional inaccessible always establishes a stronger theory.\footnote{For instance, ZFC + ``there exists 87 inaccessible cardinals" proves the consistency of ZFC + ``there exist exactly 86 inaccessible cardinals." The model is $V_{\text{the 87th}}$.} Nevertheless, in the context of the large cardinal hierarchy as a whole, Mahlo cardinals are still quite weak in terms of relative consistency strength.

In the literature, there is a distinction between ``small" large cardinals and ``large" large cardinals.\footnote{This is unfortunate terminology for reasons that will be discussed in section 6.} This divide has everything to do with Scott's Theorem, which we shall arrive at shortly. First, we must review Godel's constructible universe, denoted L.

The L-hierarchy is defined similarly to the cumulative hierarchy, replacing the full powerset with the ``definable" powerset at the successor stage in the tranfinite recursion.\footnote{The definable powerset of a set A, denoted $\mathcal{D^+}(A)$ contains only those subsets of A that can be specified by a formula using some finite number of parameters taken from A.}

\begin{itemize}
    \item $L(0)=\emptyset$
    \item $L(\beta + 1) = \mathcal{D^+}(L(\beta))$
    \item $L(\gamma) = \cup_{\alpha < \gamma} L(\alpha)$ for limit ordinals $\gamma$
    
\noindent And finally,

    \item $L = \cup_{\alpha\in ON} L(\alpha)$.
\end{itemize}

Gödel established the consistency of the Continuum Hypothesis with ZFC by showing (quite nontrivially) that L is a (proper class) model of all of the ZFC axioms as well as CH. In fact, even more strongly, it can be shown in ZF that L is a model of GCH, and we know that L is the minimal class containing all the ordinals that models ZF.\footnote{ZF is the standard notation for ZFC without the axiom of choice. Furthermore, Sierpiński showed in ZF that GCH implies AC, so by adding the hypothesis V=L to the ambient theory ZF, we freely obtain GCH and AC as consequences.} It is also consistent for there to exist $\delta$-weakly inaccessible and even Mahlo cardinals in L (assuming that it is consistent for them to exist in V). One may think, then, that a natural candidate axiom to augment ZFC by would by the Axiom of Constructibility, stating that V=L. After all, this is consistent with the axioms of set theory including certain large cardinals, and, by implying GCH and AC, cleans up the picture of the cumulative hierarchy considerably. Indeed, in asserting that there cannot be two different standard models of V containing all the same ordinals, the Axiom of Constructibility restricts the universe to a mathematical object that we can much more reasonably call ``concrete." Unfortunately, as Scott showed and as we shall see, the situation is not so simple...

\section{Ultrafilters and ultraproducts}

We shall eventually give two conceptual characterizations of one of the most widely studied of all large cardinals: a measurable cardinal. To give the first-order one that is of a more combinatorial flavor, we first introduce some machinery.

\begin{defn}
   Let I be a nonempty set. A filter D over I is defined to be a set $D\subseteq \mathcal{P}(I)$ such that:
        \begin{itemize}
            \item (Full set) $I\in D$
            \item (Closure under finite intersections) If $X, Y\in D$, then $X\cap Y\in D$
            \item (Upwards closure) If $X\in D$ and $X\subset Z\subset I$, then $Z\in D$.
        \end{itemize}
\end{defn}

A filter D is a \textit{proper filter} if it is not $\mathcal{P}(I)$. Additionally, a filter is said to be \textit{principal} if $\exists Y\subset I$ such that $D = \{X\subset I : Y\subset X\}$. As expected, a filter is \textit{nonprincipal} if it is not principal. A canonical example of a nonprincipal filter is the set of all cofinite subsets of an infinite set I—this is called the \textit{Fréchet filter}.

\begin{prop}
A filter D is principal iff $\cap D\in D$.
\end{prop}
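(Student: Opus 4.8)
The plan is to prove both directions directly from the definitions, the key observation being that the generator $Y$ of a principal filter must coincide with the intersection $\bigcap D$ in each case.

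For the forward direction I would assume $D$ is principal, say $D = \{X \subseteq I : Y \subseteq X\}$ for some fixed $Y \subseteq I$. First I note that $Y \in D$, since $Y \subseteq Y$. Then I would compute $\bigcap D = Y$ by two inclusions: on one hand every member of $D$ contains $Y$ by definition, so $Y \subseteq \bigcap D$; on the other hand, since $Y$ is itself a member of $D$, the intersection over all members is contained in $Y$, giving $\bigcap D \subseteq Y$. Hence $\bigcap D = Y \in D$, as desired.

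For the converse I would assume $\bigcap D \in D$ and set $Y := \bigcap D$, then verify that $D = \{X \subseteq I : Y \subseteq X\}$ by double inclusion. The inclusion from left to right is immediate, since any $X \in D$ contains the intersection $Y$. For the reverse inclusion, given any $X$ with $Y \subseteq X \subseteq I$, I invoke the upward-closure axiom together with the hypothesis $Y \in D$ to conclude $X \in D$. This exhibits $D$ as the principal filter generated by $Y$.

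The steps are routine, so I do not anticipate a serious obstacle; the only points requiring care are, first, noting that $\bigcap D$ is well-defined and taken over a nonempty family (it is, since $I \in D$), and second, recognizing that the generator of a principal filter is \emph{forced} to equal $\bigcap D$, which is precisely what makes the equivalence close. I would also keep an eye on the boundary cases $X = Y$ and $X = I$ in the upward-closure step, both of which are covered since $Y \in D$ by hypothesis and $I \in D$ by the full-set axiom.
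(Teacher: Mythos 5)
Your proof is correct and follows essentially the same route as the paper's: both directions hinge on identifying the generator $Y$ of a principal filter with $\bigcap D$ and using upward closure to compare $D$ with the principal filter generated by $\bigcap D$. The only difference is cosmetic—you argue the inclusion $\{X : Y \subseteq X\} \subseteq D$ directly via upward closure, where the paper phrases the same step as a proof by contradiction.
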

\begin{proof}
Let D be a filter over I. First, suppose that $A=\cap D\in D$. Let $F_A$ be the principal filter generated by A. Then clearly $D\subset F_A$. Let $Y\in F_A$ and suppose $Y\notin D$. But then since $A\subset Y$, $A\notin D$ as well, which is a contradiction. So $F_A\subset D$ and hence $D=F_A$; thus D is principal. Conversely, suppose D is principal. Then $D=\{X\subset I : Y\subset X\}$ for some $Y\subset I$. But then clearly $\cap D = Y$, which must be in D since $Y\subset Y$.
\end{proof}

\begin{defn}
A filter D is said to be an \textbf{ultrafilter} if for all $X\in \mathcal{P}(I)$, $X\in D$ iff $(I\setminus X)\notin D$.
\end{defn}

Filters provide a notion of largeness—in particular, those sets belonging to a filter can typically be thought of as ``large" sets. (An exception might arise in the case of a principal filter, where even certain finite sets containing some particular set Y are considered large.) It is also worth noting that an ultrafilter can be conceptualized as a binary measure on an index set: it assigns every set either measure 0 or measure 1. Ultrafilters can also be characterized as maximal proper filters (for a proof see [1]), and principal ultrafilters are generated not just by a single subset (as in their definition), but indeed by singleton sets.

\begin{prop}
D is a principal ultrafilter over I if and only if $D = \{X\in \mathcal{P}(I) : i\in X\}$ for some $i\in I$.
\end{prop}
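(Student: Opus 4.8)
The plan is to prove both directions of the biconditional, treating it as a refinement of the previous proposition that identified principal filters via the condition $\cap D \in D$. First I would establish the easier direction: suppose $D = \{X \in \mathcal{P}(I) : i \in X\}$ for some fixed $i \in I$. I must check that this is indeed a principal ultrafilter. That it is a filter (and in fact principal, generated by the singleton $\{i\}$) is immediate from the definitions, since $\cap D = \{i\} \in D$. To see it is an ultrafilter, I would take an arbitrary $X \in \mathcal{P}(I)$ and observe that exactly one of $i \in X$ or $i \in I \setminus X$ holds; this is precisely the dichotomy $X \in D$ iff $(I \setminus X) \notin D$ required by the ultrafilter definition.

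For the converse, I would assume $D$ is a principal ultrafilter and aim to show it has the stated form for some $i \in I$. By \fullref{prop:1} (the characterization of principal filters), we know $A := \cap D \in D$, and $D$ is the principal filter generated by $A$, namely $D = \{X \subseteq I : A \subseteq X\}$. The crux is to argue that $A$ must be a singleton. Suppose toward a contradiction that $A$ contains two distinct elements, or more cleanly, suppose $|A| \geq 2$; then I can partition or split $A$ into a nonempty proper subset $B \subsetneq A$ and its relative complement. Since $B \not\supseteq A$, we have $B \notin D$, and since $A \setminus B \not\supseteq A$ either (as it misses points of $B$), we also have $A \setminus B \notin D$. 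I would then leverage the ultrafilter property: for any set, either it or its complement lies in $D$. Applying this to $B$ gives $I \setminus B \in D$, hence $A \subseteq I \setminus B$, which forces $A \cap B = \emptyset$, contradicting $\emptyset \neq B \subseteq A$.

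The main obstacle is packaging this splitting argument cleanly so that the ultrafilter dichotomy delivers the contradiction without circularity. The key insight to foreground is that an ultrafilter's binary-measure property is incompatible with a generating set of size greater than one: if $A$ had at least two points, we could split off a subset $B$ that the filter can neither contain nor exclude in a consistent way, violating the maximality inherent in being an ultrafilter. Once $A = \{i\}$ is forced, the description $D = \{X \subseteq I : \{i\} \subseteq X\} = \{X \in \mathcal{P}(I) : i \in X\}$ follows immediately, completing the proof. I expect the forward direction and the reduction via \fullref{prop:1} to be routine; the only genuine content is the singleton argument, which I would state carefully to make the use of the ultrafilter condition explicit.
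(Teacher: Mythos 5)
Your proposal is correct and follows essentially the same route as the paper: both prove the nontrivial direction by contradiction, splitting a purported non-singleton generator into two nonempty pieces, noting neither piece can lie in $D$, and then invoking the ultrafilter dichotomy to force a contradiction. The only cosmetic differences are that the paper intersects the generator with the complement of one piece to land back on the other piece, whereas you use the up-set description $D = \{X \subseteq I : A \subseteq X\}$ directly to get $A \cap B = \emptyset$, and your verification of the easy direction is more explicit than the paper's, which dismisses it as trivial.
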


\begin{proof}
First suppose D is a principal ultrafilter. Then $\exists Y\subset I$ s.t. $D=\{X\subset I : Y\subset X\}$. We claim that $Y = \cap D$ is a singleton set $\{i\}$ such that in fact $D = \{X\in \mathcal{P}(I) : i\in X\}$. Suppose not. Then Y is the disjoint union of two sets A and B, both of which are too small to be in D since Y is $\subset $-minimal in D. Now since D is an ultrafilter, $I\setminus A$ and $I\setminus B$ are in D. Further, since D is a filter and closed under finite intersections, $Y\cap (I\setminus A) \in D$. But $Y\cap (I\setminus A) = B$, which is a contradiction since $B\notin D$. The other direction is trivial as any element $i\in I$ corresponds to a singleton set $\{i\}\subset I$ which we take to be our $Y\subset I$.
\end{proof}

Without the axiom of choice, it is not even clear that nonprincipal ultrafilters exist. Nevertheless, this is the case, and follows from an easy application of Zorn's lemma. Although we shall prove the Ultrafilter Lemma using Zorn’s lemma, it is worth noting that the existence of nonprincipal ultrafilters is not an equivalent formulation of the axiom of choice (as Zorn’s lemma is.) In fact, the Ultrafilter Lemma is implied by, but does not imply, AC.\footnote{Interestingly, this weaker assumption of the Ultrafilter Lemma is all that is needed to perform the Banach-Tarski decomposition of the sphere.}

\begin{thm} (Ultrafilter Lemma)
Every proper filter D over I can be extended to an ultrafilter over I. In particular, nonprincipal ultrafilters exist. 
\end{thm}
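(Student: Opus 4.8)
The plan is to apply Zorn's Lemma to the collection of all proper filters that extend $D$, obtain a maximal such filter, and then argue that maximality forces the ultrafilter condition. The concluding claim about nonprincipal ultrafilters will follow by feeding the Fréchet filter into this machine.

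First I would set up the poset. Let $\mathcal{P}$ be the set of all proper filters over $I$ that contain $D$, partially ordered by inclusion; it is nonempty since $D \in \mathcal{P}$. To invoke Zorn's Lemma I must show that every chain in $\mathcal{P}$ has an upper bound. Given a chain $\mathcal{C}$, the natural candidate is $F^* = \bigcup \mathcal{C}$, and I would check directly that $F^*$ is again a proper filter extending $D$: the full set $I$ lies in every member of the chain, hence in $F^*$; upward closure is immediate; and properness holds because no proper filter contains $\emptyset$ (otherwise upward closure would make it all of $\mathcal{P}(I)$), so $\emptyset \notin F^*$.

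The one place where the chain hypothesis is genuinely needed --- and the step I would flag as requiring care --- is closure under finite intersections: if $X, Y \in F^*$, then $X$ and $Y$ lie in possibly different members of $\mathcal{C}$, but since $\mathcal{C}$ is totally ordered by inclusion, one of those members contains both, and closure of that single filter gives $X \cap Y \in F^*$. With the chain bound established, Zorn's Lemma produces a maximal element $U$ of $\mathcal{P}$.

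It then remains to verify that $U$ is an ultrafilter, i.e.\ that for every $X \subseteq I$ exactly one of $X$ and $I \setminus X$ lies in $U$. Both cannot lie in $U$, since their intersection is $\emptyset$ and $U$ is proper. For the converse, supposing $X \notin U$, I would form the filter $U' = \{Z \subseteq I : Y \cap X \subseteq Z \text{ for some } Y \in U\}$ generated by $U$ together with $X$; a routine check shows $U'$ is a filter with $U \subseteq U'$ and $X \in U'$, so $U \subsetneq U'$. Maximality forces $U'$ to be improper, so $\emptyset \in U'$, meaning $Y \cap X = \emptyset$ for some $Y \in U$; then $Y \subseteq I \setminus X$, whence $I \setminus X \in U$ by upward closure. (Alternatively, one may simply invoke the characterization of ultrafilters as maximal proper filters noted above.) Finally, for the concluding statement I would apply the lemma to the Fréchet filter on an infinite set $I$: the resulting ultrafilter contains every cofinite set, in particular each $I \setminus \{i\}$, so by the earlier Proposition it cannot coincide with the principal ultrafilter at any $i \in I$, and is therefore nonprincipal.
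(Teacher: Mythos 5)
Your proposal is correct and follows essentially the same route as the paper: apply Zorn's Lemma to the poset of proper filters extending $D$, take the union of a chain as the upper bound, conclude that the maximal element is an ultrafilter, and feed in the Fr\'echet filter for the nonprincipal case. The only difference is that you fill in details the paper leaves implicit (closure of the chain-union under finite intersections, the maximality-implies-ultrafilter step, and the verification that the resulting ultrafilter is nonprincipal), which is entirely compatible with the paper's argument.
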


\begin{proof}
Let $P=\{E\subset \mathcal{P}(I): E \text{ is a proper filter over I and } D\subset E\}$. Let $C$ be a nonempty chain of elements of $P$, ordered by set inclusion. Then $\cup C$ is clearly a proper filter containing D and hence is an upper bound in P. Thus, Zorn's lemma applies, so P has a maximal element F. Since F is a maximal proper filter, F is an ultrafilter. Letting our D be, for example, the Fréchet filter over any infinite I, we obtain a nonprincipal ultrafilter.
\end{proof}

The following Theorem 4.6 is of independent interest because it is the mathematical way of stating Arrow’s Impossibility Theorem of social choice theory.

\begin{thm}
(Arrow's Impossibility Theorem) Every ultrafilter over a finite set is principal.
\end{thm}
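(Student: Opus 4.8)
The plan is to reduce the statement to the earlier characterization of principal filters: a filter $D$ is principal if and only if $\cap D \in D$. So fix an ultrafilter $D$ over a finite index set $I$; it then suffices to prove that $\cap D \in D$.

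The key step is to exploit finiteness. Since $I$ is finite, its power set $\mathcal{P}(I)$ is finite, and hence $D \subseteq \mathcal{P}(I)$ is itself a finite family of sets, say $D = \{X_1, \dots, X_n\}$ (with $n \geq 1$, since the full set $I$ belongs to $D$). Then $\cap D = X_1 \cap \cdots \cap X_n$ is a \emph{finite} intersection of members of $D$, so by applying closure under pairwise intersections $n-1$ times we obtain $\cap D \in D$. Invoking the earlier proposition characterizing principal filters, $D$ is principal, which is exactly the claim.

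It is worth observing, although not strictly needed for principality, that $\cap D$ is in fact a singleton. Taking $X = I$ in the ultrafilter dichotomy gives $\emptyset \notin D$ (since $I \in D$ always), so $\cap D \neq \emptyset$; the earlier characterization of principal ultrafilters then forces $\cap D = \{i\}$ for a unique $i \in I$. This $i$ is precisely the ``dictator'' in the social-choice reading of the theorem.

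I do not anticipate a genuine obstacle here: the whole argument turns on the single observation that finiteness collapses the a priori infinitary intersection $\cap D$ into a finite one, which any filter must contain by its defining closure property. The only points requiring minor care are confirming that the intersection is taken over a nonempty family (guaranteed by $I \in D$) and correctly citing the two earlier propositions about principal filters and principal ultrafilters.
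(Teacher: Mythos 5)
Your proposal is correct and follows essentially the same route as the paper: use finiteness of $\mathcal{P}(I)$ to conclude $D$ is finite, deduce $\cap D \in D$ from closure under finite intersections, and invoke the characterization of principal filters via $\cap D \in D$. Your additional details (the induction on pairwise intersections, nonemptiness of the family via $I \in D$, and the singleton ``dictator'' remark) merely make explicit what the paper's terser proof leaves implicit.
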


\begin{proof}
Let I be finite and let D be a filter over I. Since I is finite, $\mathcal{P}(I)$ is finite as well. Since D is a subset of $\mathcal{P}(I)$, D must be finite. Therefore $\cap D\in D$ (since filters are closed under finite intersections). So by Proposition 3.2, D is principal. 
\end{proof}

Somewhat loosely speaking, Arrow’s theorem states that any voting system that satisfies 1) weak Pareto efficiency, and 2) independence of irrelevant alternatives, is in fact a dictatorship. Intuitively, this makes sense in connection to the above theorem when we consider that the property of having \textit{one particular} set be the determiner of membership in the filter (i.e., “majority”) is analogous to the property of being a dictator.

To demonstrate that in stepping into the world of filters we have entered an arena where our size intuitions might fail us, we give one result on ultrafilters that may feel surprising and nontrivial. It can be thought of as a coloring lemma: given an ultrafilter and a function from the index set to itself, either there exists a ``large" set of fixed points or the set of non-fixed points can be 3-colored such that no point maps to a point of the same color. The proof is worth elaborating because it is strongly analogous to the proof of the Schröeder-Bernstein theorem.

\begin{prop}
Let D be an ultrafilter over a set I and let $f: I\to I.$ Then there is a set $X\in D$ such that either $f(i)=i$ for all $i\in X$, or $f(i)\notin X$ for all $i\in X$.
\end{prop}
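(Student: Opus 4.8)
The plan is to partition the index set $I$ into three pieces based on the behavior of $f$ and use the ultrafilter's binary-measure property to select whichever piece (or the fixed-point set) lies in $D$. First I would split $I$ as $I = \text{Fix} \cup (I \setminus \text{Fix})$, where $\text{Fix} = \{i \in I : f(i) = i\}$. Since $D$ is an ultrafilter, exactly one of these two sets is in $D$. If $\text{Fix} \in D$, we are done immediately by taking $X = \text{Fix}$, which witnesses the first alternative. So the substantive case is when the set of non-fixed points $J = I \setminus \text{Fix}$ belongs to $D$, and here we must produce a set $X \in D$ on which $f$ has no fixed points (automatic, since $X \subseteq J$) \emph{and} which is disjoint from its own image under $f$, i.e. $f(i) \notin X$ for all $i \in X$.

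The key idea, in analogy with Schröder--Bernstein, is to 3-color $J$ so that no point receives the same color as its image, and then let the ultrafilter choose one color class. Concretely, I would seek a function $c : J \to \{0,1,2\}$ with the property that $c(i) \neq c(f(i))$ for every $i \in J$. Granting such a coloring, the three color classes $C_0, C_1, C_2$ partition $J$; since $J \in D$ and $D$ is a filter closed under the ultrafilter dichotomy, finite additivity of the binary measure forces at least one $C_k \in D$ (if all three lay outside $D$, their complements would all lie in $D$, and intersecting these complements would show $J \notin D$, a contradiction). Taking $X = C_k$, any $i \in X$ maps to $f(i)$, which has a different color and hence lies in a different class $C_j$ with $j \neq k$; in particular $f(i) \notin C_k = X$, giving the second alternative.

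The main obstacle, and the heart of the proof, is establishing the existence of the 3-coloring $c$ on the fixed-point-free part $J$. This is a purely combinatorial statement about the functional graph of $f$ restricted to $J$: we are asserting that the directed graph with an edge $i \to f(i)$ has chromatic number at most $3$ once all loops (fixed points) are removed. The way I would argue this is to consider the graph $G$ on vertex set $J$ with edges $\{i, f(i)\}$; because each vertex has out-degree one, $G$ is a functional graph whose connected components each contain exactly one cycle with trees hanging off it. The only even potential trouble is an odd cycle, which cannot be properly $2$-colored but is always properly $3$-colorable, and trees are $2$-colorable, so three colors always suffice for the whole graph. I would make this precise by a Zorn's-lemma or transfinite-recursion argument to handle the possibly infinite index set: take a maximal partial coloring and argue it must be total, or color one component at a time by first coloring the unique cycle (using at most $3$ colors, splitting into the even- and odd-length cases) and then propagating outward along the trees, where each new vertex has a single already-colored neighbor and thus at least two admissible colors remain. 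Once the coloring is in hand, the ultrafilter step above is immediate.
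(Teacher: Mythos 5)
Your proposal is correct and follows essentially the same route as the paper's own proof: both use the ultrafilter dichotomy to reduce to the fixed-point-free set $J$, construct a 3-coloring of $J$ in which no point shares a color with its image under $f$, and then invoke finite additivity of the ultrafilter to extract a single color class lying in $D$. The one slip is your claim that every connected component of the functional graph contains \emph{exactly} one cycle --- over an infinite index set a component may contain none (e.g.\ $f$ acting as a successor map on a copy of $\mathbb{Z}$) --- but such acyclic components are precisely the trees you already observe to be 2-colorable, so the argument goes through.
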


\begin{proof}
Let $J=\{i\in I : f(i)\neq i\}.$ Then since D is an ultrafilter, either $J\in D$ or $I\setminus J\in D$. If $I\setminus J\in D$ we are done as $I\setminus J$ satisfies the property that $f(i)=i$ for all $i\in I\setminus J$. So we can assume that $J\in D$. If J can be partitioned into 3 sets $X_1, X_2, X_3$ such that for $n\in \{1,2,3\}$, $f(i)\notin X_n$ for all $i\in X_n$ then at least one of $X_1, X_2, X_3$ must be in D because the \textit{dual ideal}\footnote{Every filter admits of a dual ideal, where the definition of an ideal is the inverse of that of a filter. In particular, ideals contain the empty set, are closed under finite unions, and are downwards closed, thus providing a notion of smallness.} of D is closed under finite unions and we already have that $J\in D$. 

We now make the following observations. For every point $i\in I,$ the sequence obtained by repeatedly applying f either ends in a fixed point (in $I\setminus J$), ends in a cycle, or is an infinite non-repeating sequence. We will show it is possible to construct $X_1, X_2, X_3$ with the desired property. To begin our construction, we initially let $X_1 = \{i\in J : f(i)\in I\setminus J\}$. Recursively, let $A_1 = \{i\in J : f(i)\in X_1\}$ and in general let $A_k = \{i\in J : f(i)\in A_{k-1}\}$. Now we initially let $X_2 = \{\cup A_k : k\bmod 2 = 0\}$ and $X_3 = \{\cup A_k : k\bmod2 = 1 \}$. This gives us a partial partition with the desired property because by definition f maps elements of $X_2$ to $X_3$, elements of $X_3$ to $X_2$ or to $X_1$ and elements of $X_1$ to $I\setminus J \not\subset J$. However, we have not yet exhausted J, as there may be elements of J whose paths under f end in a cycle or constitute an infinite non-repeating sequence. But for such remaining elements, we can do the following: Consider the set $C_1 = \{i\in J : \exists n\in \mathbf{N} f^n(i)=i\}.$ Note that $C_1$ can be partitioned into sets of cycles $D_i$. We can arbitrarily assign every third element in each $D_i$ to one of $X_1, X_2, $ and $X_3$ (unless the cycle $D_i$ has length equal to 1 (mod 3), in which case we simply ensure in one of many possible ways that no adjacent elements of the cycle are assigned to the same $X_n$.) Now for each $D_i,$ we can pick an arbitrary element and assign every element in its fiber $F_{i1}$ to the $X_n$ that at least one element of its fiber is already assigned to (because it is in $D_i$). Now recursively, let $F_{ik} = \{i\in J : f(i)\in F_{i,k-1}\}$. Since each $F_{ik}$ contains at least one element that has already been assigned to an $X_n$ (because each $F_{ik}$ contains an element in $D_i$), we can assign all the elements of $F_{ik}$ to the same $X_n$.

The only remaining possibility is that an element of J has a path under f that constitutes an infinite non-repeating sequence. So we consider the set $B = \{i\in J : \neg \exists n, m\in \mathbf{N} \text{ s.t. } n\neq m\wedge f^n(i)=f^m(i)\}$.

This set can be uniquely maximally partitioned into disjoint infinite sets $T_i$ (for ``tree") that are closed under both f and $f^{-1}$ and such that $\forall x\in T_i, \exists y\in T_i$ s.t. $f(y)=x$. For each $T_i$, let $x_i$ be the unique element such that for all $n\in \mathbf{N}$, $f^n(x_i)$ has a fiber that is a singleton set. For all $n\in \mathbf{Z}$, assign $f^n(x_i)$ (or all the elements of $f^n(x_i)$ when n is negative), to $X_2$ if n is even and $X_3$ for n odd. This gives the desired partition.
\end{proof}

Having built up some experience with ultrafilters, we can now provide the theory leading up to the definition of an ultrapower.

\begin{defn}{Ultraproducts and Ultrapowers}

\begin{itemize}
    \item Let I be a nonempty set, D a proper filter over I, and for each $i\in I$, $A_i$ a nonempty set.
        \item Let $C = \prod_{i\in I} A_i$, the Cartesian product of these sets.
        \item For any two functions $f, g\in C$, we say f and g are D-equivalent, denoted $f =_D g$, iff $\{i\in I : f(i) = g(i)\}\in D$.
        \item Note that $=_D$ is an equivalence relation.
        \item The \textbf{reduced product of $A_i$ modulo D}, denoted $\prod_D A_i$, is the set of all equivalence classes of $=_D$.
        \item So, $\prod_D A_i = \{f_D : f\in C\}$, where $f_D = \{g\in C : f =_D g\}$.
        \item If all the factors $A_i$ are the same, we call this a \textbf{reduced power}.
        \item If D is not just a filter but an ultrafilter, then we have not just a reduced product but an \textbf{ultraproduct} or, in the case that the $A_i$ are all the same, an \textbf{ultrapower}.
        \end{itemize}
\end{defn}

The following theorem is known as the Fundamental Theorem of Ultraproducts. See [1] for a proof.

\begin{thm} (Łoś)
 Let B be the ultraproduct $\prod_D \mathfrak{U}_i$ with D a filter and the $\mathfrak{U}_i$ models indexed by I. Then for any sentence $\varphi$ of our language $\mathcal{L}$, $B\models \varphi$ if and only if $\{i\in I : \mathfrak{U}_i\models \varphi\}\in D$.
\end{thm}

In other words, ultraproducts are ``truth-preserving"—if a statement of our first-order language holds in ``most" of the factor models, it holds in the ultraproduct as well. Note that the statement of truth-preservation is even more direct in the case of an ultrapower because there is only one factor model.

We now give one application of ultrapowers: we will construct a nonstandard model of the natural numbers.
\newline

\begin{exmp}
\item Let $N^*$ be a nontrivial ultrapower (i.e., relative to a nonprincipal ultrafilter) of $\omega$ copies of the natural numbers. We shall call this the hypernatural numbers. Recall that the elements of $N^*$ are actually functions. By the Fundamental Theorem of Ultraproducts, all first-order statements that are true of the natural numbers are true of the hypernaturals. But, the hypernaturals have some weird properties.
        \item For instance, you can have an infinite decreasing sequence (of functions) in $N^*$:
        \begin{itemize}
            \item $1 2 3 4 5 6 7 8 ...$
            \item $1 1 2 3 4 5 6 7 ...$
            \item $1 1 1 2 3 4 5 6 ...$
            \item $1 1 1 1 2 3 4 5 ...$
            \item $1 1 1 1 1 2 3 4 ...$
            \item $1 1 1 1 1 1 2 3 ...$
            \end{itemize}

Each row (function) of the above table is less than the row directly above it on a cofinite (hence ``large") set of elements. It may seem that this violates Łoś's theorem, but in fact it reveals something else: the property ``there are no infinite decreasing sequences" of the natural numbers is not expressible in first-order language. The situation is similar when taking ultrapowers of the real numbers to form the ``hyperreals."\footnote{The hyperreals are a much more commonly studied object in a branch of mathematics known as ``nonstandard analysis" or ``Robinsonian analysis" launched by Abraham Robinson.} The Archimedean property of the reals actually fails for the hyperreals, but this is only because the Archimedean property is not actually expressible as a first-order statement either.
\end{exmp}
Furthermore, we see by a diagonal argument that $N^*$ is in fact uncountable—hence, very nonstandard!

\begin{prop}
Every countable subset of $N^*$ has an upper bound in $N^*$. Thus, $N^*$ is uncountable.
\end{prop}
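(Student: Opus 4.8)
The plan is to prove the statement in two moves: first establish the ``upper bound'' clause (every countable subset of $N^*$ is bounded above), and then deduce uncountability of $N^*$ as a quick corollary. The key structural fact I would exploit is that elements of $N^*$ are represented by functions $f\colon \omega \to \omega$, with $f_D \le g_D$ precisely when $\{i : f(i) \le g(i)\} \in D$, and that the ultrafilter $D$ is nonprincipal, hence contains the Fr\'echet filter of cofinite sets. The intuition is that nonprincipality gives us enough room to diagonalize against any countable list.

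First I would take a countable subset $\{[f_n] : n \in \omega\}$ of $N^*$, where each $f_n\colon \omega \to \omega$ is a representative function. The goal is to build a single function $g\colon \omega \to \omega$ whose class $[g]$ dominates every $[f_n]$. The natural construction is to set $g(i) = 1 + \max\{f_0(i), f_1(i), \dots, f_i(i)\}$, so that at index $i$ the value of $g$ exceeds all of the first $i+1$ listed functions simultaneously. Then for each fixed $n$, and every $i \ge n$, we have $g(i) > f_n(i)$, so $\{i : f_n(i) < g(i)\}$ contains the cofinite set $\{i : i \ge n\}$. Since $D$ is nonprincipal it extends the Fr\'echet filter, so this cofinite set lies in $D$, and by upward closure $\{i : f_n(i) < g(i)\} \in D$; hence $[f_n] < [g]$ in $N^*$. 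Thus $[g]$ is an upper bound, which proves the first clause.

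For the uncountability clause I would argue by contradiction: if $N^*$ were countable, then $N^*$ itself would be a countable subset of $N^*$ and would therefore have an upper bound $[g] \in N^*$ by the clause just proved. But then consider the class of the function $i \mapsto g(i) + 1$; its representative strictly dominates $g$ on the \emph{whole} index set $I = \omega$, which certainly lies in $D$, so this class is a member of $N^*$ strictly exceeding the supposed upper bound $[g]$, a contradiction. Hence $N^*$ cannot be countable.

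The step I expect to require the most care is verifying that the dominating function $g$ genuinely works for \emph{every} $n$ at once rather than for finitely many, which is exactly where the diagonal choice $g(i) = 1 + \max_{k \le i} f_k(i)$ earns its keep: the ``shift by $n$'' means we only lose an initial segment of indices for each $f_n$, and losing an initial (finite) segment is harmless precisely because $D$ is nonprincipal and hence judges cofinite sets to be large. I would flag that this nonprincipality is essential—for a principal ultrafilter concentrated at a single index $i_0$, the value $g(i_0)$ alone determines the class and the diagonal trick collapses—so the hypothesis that the ultrapower is nontrivial is doing real work here.
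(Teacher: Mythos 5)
Your proposal is correct and matches the paper's proof essentially verbatim: the same diagonal construction $g(i) = 1 + \max_{k \le i} f_k(i)$, the same observation that each $\{i : f_n(i) < g(i)\}$ is cofinite and hence in the nonprincipal ultrafilter $D$. Your explicit contradiction argument for the uncountability clause (adding $1$ to a purported upper bound of all of $N^*$) is a nice touch that the paper leaves implicit.
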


\begin{proof}
Let D be our nonprincipal ultrafilter and let $C=\{f_{1_D}, f_{2_D}, f_{3_D}, ...\}$ be a countable subset of $N^* = \Pi_D A_i$ where our index set I is $\omega$ and all the factors $A_i = \mathbf{N}$. We can think of this as a countable set of sequences of natural numbers. We construct an upper bound $f_{U_D}$ as the equivalence class of the function $f_U : \omega \to \omega$ which we define by the diagonalization method as follows: let $f_U(j)=max_{i\leq j}{f_i(j)}+1$. Then $f_U$ is greater than each $f_i$ on all but $i-1$ coordinates (that is, on all but a finite set of coordinates.) Since D is nonprincipal, the cofinite sets must be in D. So $f_{U_D}$ is indeed an upper bound for C.
\end{proof}

On the other hand, it is useful to verify by example that our statement that ``everything that is true of the natural numbers is true of the hypernaturals" for a statement that is expressible in first-order logic. We choose that statement ``every natural number is either zero or the successor of some other natural number" as our example.

If it is not the case that a ``number" (function) is zero in $N^*$, then the set of coordinates on which that function equals 0 is not in our ultrafilter D. But then its complement, the set of coordinates on which it does not equal 0, is in D, since D is an ultrafilter. Any coordinate that does not equal 0 is seen to be a successor since we can subtract 1 and obtain a predecessor (in other words we are simply invoking that ``every natural number is either 0 or the successor of a natural number" is indeed true in $\mathbf{N}$). So the set of coordinates that are successors (equivalently, ``not-0") is in D, and thus ``every number is either zero or a successor" holds in $N^*$ as well (because we showed that non-zero implies successor.) However, note the importance of having an \textit{ultra}filter: if we were to construct $N^*$ using some D that is a non-ultra filter, then if it is not the case that a ``number" (function) is zero, the set of coordinates on which it is 0 would not be in D, but the complement of this set need not be in D either. That is, the set of coordinates that are not 0 (successors, in the ordinary natural number sense) is not necessarily ``large." So we cannot conclude from a ``number" being non-zero that it is a successor. There could be numbers in such a model that are neither zero nor successors.

\section{Measurable cardinals}

After a lengthy digression, we are now ready to return to the definition of a measurable cardinal.

\begin{defn}
An ultrafilter is \textbf{$\alpha$-complete} if it is closed under the intersection of strictly less than $\alpha$-many sets.
\end{defn}

\begin{defn}
A cardinal $\alpha$ is said to be \textbf{measurable} if there exists a nonprincipal, $\alpha$-complete ultrafilter over $\alpha$.
\end{defn}

Technically, $\omega$ is measurable, and this conforms with our intutions that measurable cardinals should be ``much larger" than anything below them. However, we shall only really care about uncountable measurable cardinals and will henceforth use the term ``measurable" with that meaning. It can be shown that measurable cardinals are indeed inaccessible, but it was an open question for over three decades whether the least inaccessible cardinal might in fact be measurable. Hanf eventually showed this was not the case. In fact, we have the following, which suggests just how large measurable cardinals are:

\begin{thm}
If $\kappa$ is an (uncountable) measurable cardinal, then $\kappa$ is the $\kappa$-th inaccessible cardinal.
\end{thm}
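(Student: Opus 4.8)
The plan is to deduce the theorem from a single structural fact about the set $I=\{\alpha<\kappa:\alpha\text{ is inaccessible}\}$: namely, that $I$ belongs to any witnessing nonprincipal $\kappa$-complete ultrafilter $U$ on $\kappa$. First I would dispose of the preliminary that a measurable $\kappa$ is itself inaccessible, which needs only the measure: if $\mathrm{cf}(\kappa)=\lambda<\kappa$ then $\kappa$ is covered by $\lambda$-many bounded sets, none of which lies in $U$ (a set of size $<\kappa$ cannot be in a nonprincipal $\kappa$-complete ultrafilter, by intersecting complements of singletons), so by $\kappa$-completeness the empty intersection of their complements would lie in $U$, a contradiction; and if $2^\lambda\ge\kappa$ for some $\lambda<\kappa$ one injects $\kappa$ into ${}^{\lambda}2$ and uses $\kappa$-completeness to intersect the $\lambda$-many coordinate-fibres selected by $U$, landing inside a single point, again impossible. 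Granting $I\in U$, the theorem follows immediately: any member of $U$ is unbounded in $\kappa$ and of cardinality $\kappa$ (same completeness/nonprincipality argument), so since $I\subseteq\kappa$ its order type is exactly $\kappa$, i.e. $\kappa$ is the $\kappa$-th inaccessible.

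The engine for proving $I\in U$ is the ultrapower embedding. I would form $M=\mathrm{Ult}(V,U)$, the ultrapower of the universe itself, using the \L o\'s\ machinery already developed, and observe that $\kappa$-completeness of $U$ (indeed mere countable completeness) rules out infinite descending $\in$-chains, so the ultrapower is well-founded and may be identified with a transitive class $M$ via the Mostowski collapse. \L o\'s's theorem then yields an elementary embedding $j:V\to M$. A short computation shows the critical point is $\kappa$: every $f:\kappa\to\alpha$ with $\alpha<\kappa$ is constant on a $U$-set (partition $\kappa$ into its $<\kappa$ fibres and use completeness), so $j(\alpha)=\alpha$ for $\alpha<\kappa$, while the identity function represents $\kappa$ exactly, giving $\kappa=[\mathrm{id}]<[c_\kappa]=j(\kappa)$, where $c_\kappa$ is the constant function with value $\kappa$.

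With $j$ in hand, two further ingredients finish the argument. The first is the closure property ${}^{\kappa}M\subseteq M$, which again flows from $\kappa$-completeness; it guarantees $V_{\kappa+1}\subseteq M$ and $\mathcal{P}(\lambda)^M=\mathcal{P}(\lambda)$ for $\lambda<\kappa$, so $M$ computes cofinalities and power sets below $\kappa$ correctly and therefore agrees that $\kappa$ is inaccessible. The second is the reflection principle $X\in U\iff\kappa\in j(X)$ for $X\subseteq\kappa$, which is just \L o\'s\ applied to the identity and the constant function $c_X$. Now by elementarity $j(I)$ is precisely the set of $M$-inaccessibles below $j(\kappa)$; since $\kappa$ is inaccessible in $M$ and $\kappa<j(\kappa)$, we get $\kappa\in j(I)$, whence $I\in U$ by reflection, completing the proof.

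I expect the genuine obstacle to be the passage to the proper-class ultrapower $\mathrm{Ult}(V,U)$ together with the verification of ${}^{\kappa}M\subseteq M$, rather than the inaccessibility bookkeeping. Forming an ultrapower of $V$ requires either Scott's trick (replacing the proper-class equivalence classes by their sets of minimal rank) or a reduction to the set models $V_\alpha$, and one must check that \L o\'s's theorem survives this maneuver. Moreover the closure ${}^{\kappa}M\subseteq M$, the step that actually makes $\kappa$ reflect down to a large set of inaccessibles, is exactly where full $\kappa$-completeness (and not merely countable completeness) is indispensable. Everything else is routine once these two points are secured.
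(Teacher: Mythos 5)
The paper states this theorem without proof (it appears as a bare assertion following the discussion of Hanf's work), so your proposal can only be judged on its own terms. Most of your architecture is the standard one and is sound: the combinatorial proofs that a measurable $\kappa$ is regular and a strong limit, the well-founded ultrapower $M=\mathrm{Ult}(V,U)$ with Scott's trick, the computation that $\mathrm{crit}(j)=\kappa$, the closure ${}^{\kappa}M\subseteq M$ and the consequent fact that $\kappa$ is still inaccessible in $M$, hence $\kappa\in j(I)$. The genuine gap is the linchpin you announce in your first sentence: that $I$ belongs to \emph{any} witnessing ultrafilter $U$, via the ``reflection principle $X\in U\iff\kappa\in j(X)$.'' What \L o\'s's theorem actually gives is $X\in U\iff[\mathrm{id}]\in j(X)$, and your identification $[\mathrm{id}]=\kappa$ is not a ``short computation'' — it is precisely the statement that $U$ is \emph{normal}, which you never prove and which fails for general witnesses (for non-normal $U$ one has $[\mathrm{id}]>\kappa$, since some regressive function is non-constant on every $U$-large set). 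Worse, your structural claim is not merely unproven but false: if some witnessing ultrafilter $U$ contained $I$, then pushing it forward along a bijection $h:\kappa\to\kappa$ that maps $I$ into the successor ordinals yields $h_*U=\{X:h^{-1}[X]\in U\}$, which is again a nonprincipal $\kappa$-complete ultrafilter on $\kappa$ but contains $h[I]\subseteq\kappa\setminus I$ and therefore omits $I$. So ``$\kappa\in j(X)\Rightarrow X\in U$'' cannot hold for every witness.

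The repair is short and stays inside your framework, because you never actually need $I\in U$ for the original $U$ — you only need $I$ to lie in \emph{some} nonprincipal $\kappa$-complete ultrafilter. The family $D=\{X\subseteq\kappa:\kappa\in j(X)\}$ is itself such an ultrafilter (indeed a normal one); this is exactly the construction the paper sketches after its embedding characterization of measurability, where one ``collects those subsets of $\kappa$ whose image under $j$ contains $\kappa$ as an element.'' Your argument shows $\kappa\in j(I)$, i.e.\ $I\in D$, and then your own completeness/nonprincipality size argument applied to $D$ gives $|I|=\kappa$, hence $\mathrm{otp}(I)=\kappa$. Alternatively you can dispense with measures at this step entirely: since $j$ fixes ordinals below $\kappa$, one has $j(I)\cap\kappa=I$, and $\kappa\in j(I)$; so if $\mathrm{otp}(I)=\delta<\kappa$, then $\mathrm{otp}(j(I))\geq\delta+1$, while elementarity gives $\mathrm{otp}(j(I))=j(\delta)=\delta$ (order type being absolute for transitive $M$), a contradiction. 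Either patch completes your proof; as written, the proof does not go through.
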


Set theorists have strong intuitions for the consistency of measurable cardinals (for more on this, see section 8.) Thus, the following monumental result may be seen as a blow to our prospects for simplicity in our characterization of the universe of sets. In promoting the importance of set theory, Hilbert once famously quipped that no one could expel us from Cantor's paradise. Nevertheless, Scott swiftly expels us from Gödel's paradise:

\begin{thm}
(Scott) If there exists a measurable cardinal, then $V\neq L$.
\end{thm}

We defer the proof until the next section.

\section{The general template for large cardinals}

``Large" large cardinals are those at the level of measurability and beyond (i.e., those not compatible with V=L). By contrast, ``small" large cardinals are large cardinals for which it is not known to be inconsistent that they could exist in the constructible universe. There is a general template that characterizes essentially all of these ``large" large cardinals, which we will see now.

\begin{defn}
An \textbf{inner model} (of ZFC) is a proper class M that is a transitive $\in$-model of ZFC with $ON\subseteq M$.
\end{defn}

Gödel's constructible universe L is the canonical inner model and is in fact the minimal inner model in the sense that $L\subseteq M$ for any inner model M.

\begin{defn}
For structures $M_0$ and $M_1$ of a language L, an injective function $j: M_0\to M_1$ is an \textbf{elementary embedding} if for any formula $\phi(v_1,...v_n)$ of L and $x_1,...x_n\in M_0$, $M_0\models \phi[x_1,...x_n]$ iff $M_1\models \phi[j(x_1),...,j(x_n)]$.
\end{defn}

Elementary embeddings can be viewed as truth preserving mappings from V to some transitive class M, which thus turns out to be an inner model. We only consider elementary embeddings that are not the identity map, and this assumption will henceforth not be made explicit. It is easy to show that if all of the ordinals are fixed points under j, then j is in fact the identity map. Contrapositively, for any nontrivial elementary embedding, there must exist some ordinal not fixed by it, in light of which the following definition makes sense:

\begin{defn}
For $j:N\to M$ an elementary embedding where $N, M$ are inner models, the \textbf{critical point} of j, $crit(j)$, is the least ordinal $\delta$ moved by j.
\end{defn}

We can use this to give an alternative second-order characterization of measurability, which we could have taken to be the definition of a measurable cardinal except that it would not conform with the concept's historical origins.

\begin{prop}
A cardinal $\kappa$ is a measurable cardinal iff there exists a transitive class M and an elementary embedding $j: V\to M$ such that $\kappa$ is the critical point of j.
\end{prop}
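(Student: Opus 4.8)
The plan is to prove both directions, the forward one (measurability gives an embedding) being the substantial half. First I would take a nonprincipal, $\kappa$-complete ultrafilter $U$ over $\kappa$ and form the ultrapower $\mathrm{Ult}(V,U) = \prod_U V$, whose elements are $=_U$-classes of functions $f : \kappa \to V$. Since $V$ is a proper class the naive classes are too large, so I would invoke Scott's trick — representing each class by its members of least rank — to keep them set-sized. Defining $j : V \to \mathrm{Ult}(V,U)$ by $j(x) = [c_x]_U$ (the class of the constant function), Łoś's theorem (Theorem 4.7, with every factor equal to $V$) immediately makes $j$ an elementary embedding.

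The crucial step is then well-foundedness of the membership relation $E$ given by $[f]\,E\,[g] \iff \{i : f(i) \in g(i)\} \in U$. If an infinite descending chain $[f_{n+1}]\,E\,[f_n]$ existed, each set $\{i : f_{n+1}(i) \in f_n(i)\}$ would lie in $U$, and by countable completeness (a consequence of $\kappa$-completeness) their intersection would be nonempty; any $i$ in it would yield an infinite descending $\in$-chain in $V$, contradicting Foundation. Since $E$ is then well-founded and extensional, the Mostowski collapse identifies $\mathrm{Ult}(V,U)$ with a transitive class $M$ and $j$ with an embedding $V \to M$. To locate the critical point: for $\alpha < \kappa$, $\kappa$-completeness forces any $f : \kappa \to \alpha$ to be constant on a set in $U$ (partition $\kappa$ into the $\alpha < \kappa$ preimages; one block is in $U$), so by induction $j(\alpha) = \alpha$; whereas the identity satisfies $j(\alpha) < [\mathrm{id}] < j(\kappa)$ for every $\alpha < \kappa$ — the lower bound because nonprincipal $\kappa$-complete ultrafilters contain all tail sets $\{\xi : \alpha < \xi < \kappa\}$, the upper bound because $\mathrm{id}(\xi) < \kappa$ everywhere. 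Hence $j(\kappa) > \kappa$ with all smaller ordinals fixed, so $\mathrm{crit}(j) = \kappa$.

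For the reverse direction, given $j : V \to M$ with $\mathrm{crit}(j) = \kappa$, I would define $U = \{X \subseteq \kappa : \kappa \in j(X)\}$ and check the defining properties by pushing sets through $j$. Since $j(\alpha) = \alpha$ for $\alpha < \kappa$ while $j(\kappa) > \kappa$, we get $\kappa \in j(\kappa)$, so $\kappa \in U$; closure under intersection and upward closure follow from $j(X \cap Y) = j(X) \cap j(Y)$ and monotonicity, and the ultra property from $j(\kappa \setminus X) = j(\kappa) \setminus j(X)$ together with $\kappa \in j(\kappa)$. Nonprincipality holds because $j(\{\alpha\}) = \{\alpha\} \not\ni \kappa$, so no singleton belongs to $U$. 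Finally, for $\kappa$-completeness I would take a sequence $\langle X_\beta : \beta < \lambda\rangle$ with $\lambda < \kappa$ and each $X_\beta \in U$; since $j$ fixes $\lambda$ and each $\beta < \lambda$, elementarity gives $j\!\left(\bigcap_\beta X_\beta\right) = \bigcap_{\beta < \lambda} j(X_\beta) \ni \kappa$, so $\bigcap_\beta X_\beta \in U$.

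The hard part is the well-foundedness argument together with the foundational care needed to form an ultrapower of a proper class. This is the single place where $\kappa$-completeness (really countable completeness) is indispensable: Łoś's theorem alone only produces an elementary extension, and it is precisely well-foundedness that lets us collapse to a genuine transitive inner model $M$ — the feature distinguishing measurable cardinals from cardinals carrying arbitrary ultrafilters. The class-sized nature of $\mathrm{Ult}(V,U)$ forcing the use of Scott's trick is a subtlety entirely absent from the finite-index ultraproducts treated earlier, and I would flag it explicitly.
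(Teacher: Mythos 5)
Your proof is correct and takes essentially the same route the paper sketches: the forward direction forms the (Scott-trick) ultrapower of $V$ by the witnessing ultrafilter, gets elementarity from the fundamental theorem of ultraproducts, gets well-foundedness from countable completeness, and collapses to a transitive $M$; the reverse direction defines $U = \{X \subseteq \kappa : \kappa \in j(X)\}$ and verifies the ultrafilter properties by pushing sets through $j$. The paper deliberately omits the details, so your write-up simply fills in its outline --- critical-point computation and completeness check included --- with no divergence in approach.
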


The proof of the above operates in the forward direction by showing that if $\kappa$ is a measurable cardinal, then $\kappa$ is the critical point of an elementary embedding $j:V\to M$ where M is (isomorphic to) an ultrapower of V with respect to an ultrafilter U witnessing the measurability of $\kappa$. In the other direction, given an elementary embedding $j$ with critical point $\kappa$, we construct an ultrafilter by collecting those subsets of $\kappa$ whose image under $j$ contains $\kappa$ as an element, and then show that this ultrafilter is nonprincipal and $\kappa$-complete over $\kappa$. We omit the details here, turning instead to now giving a proof of Scott's theorem.

\begin{thm}
(Scott) If there exists a measurable cardinal, then $V\neq L$.
\end{thm}
\begin{proof}
Suppose V=L. Let $\alpha$ denote the least measurable cardinal and let $j:V\to M$ witness the measurability of $\alpha$. Then $M\subseteq V$ and since $V=L$ we have $M\subseteq L$. But we know L is the minimal inner model, so in fact $M=L$. Now since j is an elementary embedding, hence truth-preserving for first-order statements, $(j(\alpha)$ is the least measurable cardinal)$^M$. (That is, the model M ``thinks" that $j(\alpha)$ is the least measurable cardinal.) But  $V=M=L$ so $j(\alpha)$ is the least measurable cardinal in V as well. However $\alpha$ is the least ordinal moved by j, so in particular $j(\alpha) > \alpha,$ and thus we have contradicted the assumption that $\alpha$ was the least measurable cardinal.
\end{proof}

Notice that the issue here (perhaps suppressed by the bird's eye form in which we have given the proof) is not that the measurable cardinal fails to exist in L. In passing from V to L, we keep all the same ordinals, so the set that was a measurable cardinal is still there, but we excise enough of the universe such that we lose its witnessing ultrafilter. Thus, it is no longer measurable—we cannot ``appreciate its largeness" anymore.

\subsection{Beyond measurability}

In general, the definitions of large cardinal properties stronger than measurabililty follow this same template involving critical points of elementary embeddings $j:V\to M$, but impose additional closure conditions on the target model, essentially requiring M to be ``closer" to V.

We first give an example by way of \textit{supercompact} cardinals, briefly exhibit some even stronger notions, and finally prove a limiting case that arises with \textit{Reinhardt} cardinals.

\begin{notn}
$^yx$ denotes the collection of functions from y to x.
\end{notn}

\begin{defn}
A cardinal $\kappa$ is \textbf{$\gamma$-supercompact} if there is a $j: V\to M$ such that $crit(j)=\kappa$ and $^\gamma M\subseteq M$.
\end{defn}

That is, our closure condition here requires $\gamma$-length sequences of elements of M to still be elements of M. In particular this implies that $H_{\gamma^+}\subseteq M$.\footnote{$H_{\gamma^+}$ denotes the set of all sets hereditarily of size strictly less than the cardinal successor of $\gamma$.}

\begin{defn}
A cardinal $\kappa$ is \textbf{supercompact} if $\kappa$ is $\gamma$-supercompact for every $\gamma\geq \kappa$.
\end{defn}

Note that for a cardinal $\kappa$, $\kappa$-supercompactness is equivalent to $\kappa$ being measurable. In fact, supercompactness can be seen as generalizing the notion of measurability.

\begin{prop}
If $\kappa$ is $2^\kappa$-supercompact, then there exists a (normal) ultrafilter U over $\kappa$ such that $\{\alpha\leq \kappa : \alpha$ is measureable$\} \in U$. In particular, $\kappa$ is the $\kappa$-th measurable cardinal.
\end{prop}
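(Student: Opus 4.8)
The plan is to build the required ultrafilter directly from a supercompactness embedding and then use elementarity together with the strong closure of the target model to reflect the measurability of $\kappa$ downward. First I would fix a witness $j:V\to M$ to $2^\kappa$-supercompactness, so that $\mathrm{crit}(j)=\kappa$ and $^{2^\kappa}M\subseteq M$. Exactly as in the forward direction of the second-order characterization of measurability discussed above, I would set $U=\{X\subseteq\kappa : \kappa\in j(X)\}$. The same argument that produced an ultrafilter from an embedding shows $U$ is a nonprincipal $\kappa$-complete ultrafilter over $\kappa$; moreover $U$ is \emph{normal}, with the critical point $\kappa$ itself serving as the ``seed'': given $f$ regressive on some $X\in U$, elementarity gives $j(f)(\kappa)<\kappa$, say $j(f)(\kappa)=\gamma$, and since $\gamma<\kappa=\mathrm{crit}(j)$ is fixed by $j$ one checks $\kappa\in j(\{\alpha : f(\alpha)=\gamma\})$, so $f$ is constant on a set in $U$.

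Next I would reduce the main claim to a reflection statement. By the definition of $U$, the assertion $\{\alpha<\kappa : \alpha\text{ is measurable}\}\in U$ is, unwinding, exactly the assertion that $\kappa\in j(\{\alpha<\kappa : \alpha\text{ is measurable}\})$. By elementarity $j$ sends this set to $(\{\alpha<j(\kappa) : \alpha\text{ is measurable}\})^M$, and since $\kappa<j(\kappa)$ (as $\kappa$ is moved by $j$), it suffices to prove that $M\models$ ``$\kappa$ is measurable.'' In other words, the whole proposition comes down to exhibiting, \emph{inside} $M$, a nonprincipal $\kappa$-complete ultrafilter on $\kappa$.

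The crux is that $U$ itself is such a witness and, crucially, that $U\in M$. This is where the hypothesis of $2^\kappa$-supercompactness (rather than mere measurability, which only yields $^\kappa M\subseteq M$) is essential. First, $\kappa$-closure of $M$ gives $\mathcal{P}(\kappa)\subseteq M$ (each subset of $\kappa$ is coded by its characteristic function, a $\kappa$-sequence), and transitivity then yields $\mathcal{P}(\kappa)^M=\mathcal{P}(\kappa)^V$; consequently ``being an ultrafilter on $\kappa$'' and ``being $\kappa$-complete'' are absolute between $V$ and $M$. Now $U\subseteq\mathcal{P}(\kappa)$ has size at most $2^\kappa$, so enumerating it as a surjection from $2^\kappa$ (a function into $M$) and invoking $^{2^\kappa}M\subseteq M$ places this enumeration, and hence $U$ itself, in $M$. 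Since $U$ is a nonprincipal $\kappa$-complete ultrafilter on $\kappa$ in $V$ and all these properties are absolute, $M\models$ ``$U$ witnesses that $\kappa$ is measurable.'' This is the step I expect to be the main obstacle: everything hinges on simultaneously forcing $\mathcal{P}(\kappa)$ to be computed correctly in $M$ and getting the object $U$ of size $2^\kappa$ to land in $M$, which is precisely calibrated to the $2^\kappa$-closure condition.

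Finally, for the ``in particular'' clause, I would argue that membership in a nonprincipal $\kappa$-complete ultrafilter forces unboundedness: any bounded $X\subseteq\beta<\kappa$ is a union of fewer than $\kappa$ singletons, none of which lies in $U$, so the $\kappa$-completeness of the dual ideal keeps $X$ out of $U$. Hence $\{\alpha<\kappa : \alpha\text{ is measurable}\}$ is unbounded in $\kappa$; as $\kappa$ is regular (being $2^\kappa$-supercompact, hence measurable, hence inaccessible), an unbounded subset of $\kappa$ has order type exactly $\kappa$. Therefore $\kappa$ is preceded by exactly $\kappa$ measurable cardinals, i.e. $\kappa$ is the $\kappa$-th measurable cardinal. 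Normality of $U$ in fact upgrades ``unbounded'' to ``stationary,'' so the measurables below $\kappa$ even form a stationary set.
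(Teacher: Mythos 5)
Your proof is correct and takes exactly the route the paper gestures at: the paper states this proposition without proof, and your argument is the standard one, building $U=\{X\subseteq\kappa : \kappa\in j(X)\}$ from a $2^\kappa$-supercompactness embedding (just as in the paper's sketched embedding characterization of measurability) and then using the $2^\kappa$-closure of $M$ to get $\mathcal{P}(\kappa)\subseteq M$ and $U\in M$, so that $M$ itself sees $\kappa$ as measurable and the definition of $U$ converts this into $\{\alpha<\kappa : \alpha \text{ is measurable}\}\in U$. Note also that you silently (and correctly) read the paper's $\{\alpha\leq\kappa : \dots\}$ as $\{\alpha<\kappa : \dots\}$, since an ultrafilter over $\kappa$ contains only subsets of $\kappa$.
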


Supercompactness turns out to be the crucially important notion for the current direction of inner model theory, as we shall see later.

The general template in terms of elementary embeddings allows us to define a flurry of other large cardinal notions, though we shall skimp on their motivations as they lie beyond the scope of this paper.

\begin{defn}
$\kappa$ is \textbf{$\eta$-extendible} if for some ordinal $\lambda$ there is a $j:V_{\kappa+\eta}\to V_\lambda$, where $\kappa=crit(j)$. $\kappa$ is \textbf{extendible} if $\kappa$ is $\eta$-extendible for every nonzero ordinal $\eta$.
\end{defn}

\begin{defn}
$\kappa$ is \textbf{huge} if there is a $j:V\to M$ such that $\kappa=crit(j)$ and $^{j(\kappa)} M\subset M$.
\end{defn}

There are a number of closely related variants of huge cardinals; for instance, an \textit{almost huge} cardinal is one for which the closure condition instead applies to all sequences of length strictly less than $j(\kappa)$. Huge cardinals are all of their cousins are of much greater relative consistency strength than extendible cardinals. A large cardinal property intermediate between extendible and huge, known as \textit{Vopênka's Principle}, has a curious history. It can be stated as follows: ``for any proper class of structures for the same language, there is one that is elementarily embeddable into another." This cannot be formulated as a single sentence of ZFC because it involves quantification over proper classes, which is why we call it a ``principle." Nevertheless, loosely speaking, we can define a \textit{Vopênka cardinal} to be a cardinal $\kappa$ for which Vopênka's principle holds in $V_\kappa$. Vopênka originally formulated the principle as a joke, believing it to be inconsistent with ZFC. However, before he could publish his inconsistency result, he discovered a flaw in the proof.

On the relationship between the above notions, every almost huge cardinal is a Vopênka cardinal, Vopênka's Principle implies the existence of extendible cardinals, and all extendible cardinals are supercompact.

\subsection{Reinhardt cardinals and Kunen's inconsistency}

It may be natural to ask what the ``ultimate" closure conditions we could impose in defining a large cardinal would be. Reinhardt came up with the following, the obvious idea being that we let M be V itself.

\begin{defn}
A cardinal $\kappa$ is a \textbf{Reinhardt cardinal} if there exists an elementary embedding $j: V\to V$ such that $\kappa=crit(j).$
\end{defn}

Unfortunately, Kunen subsequently showed that the specter of inconsistency finally comes crashing down on us.

\begin{thm}
(Kunen) (ZFC) There are no Reinhardt cardinals. (There is no elementary embedding $j: V\to V$.)
\end{thm}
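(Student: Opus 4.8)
The plan is to argue by contradiction, following what I take to be Kunen's strategy: assume a nontrivial elementary embedding $j:V\to V$ exists with $crit(j)=\kappa$, manufacture a specific ordinal fixed by $j$, and then exploit a purely combinatorial pathology of that ordinal to contradict elementarity. First I would build the \emph{critical sequence} $\kappa_0=\kappa$ and $\kappa_{n+1}=j(\kappa_n)$, and set $\lambda=\sup_{n<\omega}\kappa_n$. Two facts about $\lambda$ drive everything. The first is that $j(\lambda)=\lambda$: since $\kappa_n<\lambda$ we get $\kappa_{n+1}=j(\kappa_n)<j(\lambda)$, so $\lambda\le j(\lambda)$, while applying $j$ to (a code for) the cofinal map $n\mapsto\kappa_n$ yields a map that is cofinal in $j(\lambda)$ but whose range $\{\kappa_{n+1}:n<\omega\}$ has supremum $\lambda$, forcing $j(\lambda)=\lambda$. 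The second is that $\kappa$ lies outside the range of $j$ on the ordinals: $j$ is strictly increasing with $j(\alpha)=\alpha$ for $\alpha<\kappa$ and $j(\kappa)>\kappa$, so no ordinal is sent to $\kappa$. (I would also note that $crit(j)$ is measurable, by the characterization preceding Scott's theorem, hence $\kappa>\omega$.)

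Next I would invoke the combinatorial input, the Erd\H{o}s--Hajnal theorem on $\omega$-Jónsson functions: in ZFC, for the infinite cardinal $\lambda$ there is a function $f:[\lambda]^\omega\to\lambda$ --- where $[\lambda]^\omega$ denotes the set of countably infinite subsets of $\lambda$ --- such that the restriction of $f$ to $[X]^\omega$ is already onto $\lambda$ for \emph{every} $X\subseteq\lambda$ with $|X|=\lambda$. Because $f$ is a genuine set and $j(\lambda)=\lambda$, elementarity hands me $g:=j(f)$, another $\omega$-Jónsson function for $\lambda$, together with the defining equation $g(j(s))=j(f(s))$ for every $s\in[\lambda]^\omega$ (this comes from applying $j$ to the membership $(s,f(s))\in f$).

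The contradiction comes from feeding $g$ the large set $X:=j[\lambda]=\{j(\alpha):\alpha<\lambda\}$, which has size $\lambda$ and is contained in $\lambda$ since $j(\lambda)=\lambda$. Every $t\in[X]^\omega$ is the pointwise image $j[s]$ of a unique $s\in[\lambda]^\omega$; and here I would use the standard small-set lemma that $|s|<\kappa$ implies $j(s)=j[s]$ (proved by transporting a bijection $\omega\to s$ across $j$ and using $j(\omega)=\omega$, which needs $\kappa>\omega$). Hence $t=j(s)$ and $g(t)=g(j(s))=j(f(s))$, which lies in the range of $j$. Since $\kappa$ is not in the range of $j$, no $t\in[X]^\omega$ satisfies $g(t)=\kappa$. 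But $|X|=\lambda$ and $g$ is $\omega$-Jónsson, so $g$ must take the value $\kappa<\lambda$ somewhere on $[X]^\omega$ --- a contradiction, completing the proof.

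The main obstacle, and the one genuinely hard ingredient, is the existence of the $\omega$-Jónsson function: everything else is bookkeeping with the critical sequence and elementarity, whereas the Erd\H{o}s--Hajnal construction is a nontrivial transfinite recursion and is precisely where the Axiom of Choice enters (indeed Kunen's inconsistency is a theorem of ZFC, and Reinhardt embeddings are not known to be refutable in ZF alone). A secondary point I would be careful about is the legitimacy of manipulating the class $j$: I only ever apply $j$ to honest sets ($f$, $s$, and $\lambda$) and form $j[\lambda]$ as the image of a set under a class map via Replacement, so the whole argument stays within ZFC augmented by a predicate for $j$ satisfying the elementarity schema.
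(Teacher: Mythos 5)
Your proposal is correct and follows essentially the same route as the paper's proof: the critical sequence with $\lambda=\sup_{n<\omega}j^n(\kappa)$, the fixed point $j(\lambda)=\lambda$, an Erd\H{o}s--Hajnal $\omega$-J\'onsson function $f$ for $\lambda$, transfer of the J\'onsson property to $j(f)$ by elementarity, and the observation that $j(f)$ on $[j``\lambda]^\omega$ takes values only in $j``\lambda$, which misses $\kappa$ and so contradicts the J\'onsson property. If anything you are more careful than the paper at one step: where the paper simply asserts that every $s\in[j``\lambda]^\omega$ has a preimage $t\in[\lambda]^\omega$ with $j(t)=s$, you justify this with the small-set lemma that $j(t)=j``t$ for countable $t$ (transporting a bijection $\omega\to t$ across $j$ and using $j(\omega)=\omega$), which is exactly the detail needed there.
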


\begin{notn}
$f``x$ for x a set denotes the image of f when restricted to x; that is, $\{f(y) : y\in x\}$. This notation is necessary in our context because f may be defined on both x and on elements of x.
\end{notn}

\begin{notn}
For $\lambda, \kappa$ cardinals, $[\lambda]^\kappa$ denotes the subsets of $\lambda$ of cardinality $\kappa$.
\end{notn}

\begin{defn}
Let x be a set of ordinals. A function f is $\omega$-Jónsson for x if $f: [x]^\omega\to x$ and for any $y\subseteq x$ with $|y| = |x|$, $f``[y]^\omega = x$.
\end{defn}

That is, an $\omega$-Jónsson may be thought of as a function that colors the $\omega$-sized subsets of x such that every full cardinality subset y of x contains an $\omega$-sized subset of every color.

\begin{lem}
(Erdós-Hajnal) For any $\lambda$, there exists a function that is $\omega$-Jónsson for $\lambda$.
\end{lem}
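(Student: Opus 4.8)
The plan is to construct the coloring $f\colon[\lambda]^\omega\to\lambda$ by transfinite recursion, satisfying one requirement at a time. First I would reduce to the case that $\lambda$ is infinite: if $\lambda$ is finite and nonzero then $[\lambda]^\omega=\emptyset$, so $f``[y]^\omega=\emptyset\neq\lambda$ and no $\omega$-Jónsson function exists, meaning the intended reading is that $\lambda$ is an infinite cardinal. The requirements are indexed by pairs $(X,\gamma)$ with $X\in[\lambda]^\lambda$ and $\gamma<\lambda$: the requirement $R_{X,\gamma}$ demands that some $a\in[X]^\omega$ be assigned color $\gamma$. Meeting every $R_{X,\gamma}$ is precisely the assertion that $f``[X]^\omega=\lambda$ for each full-size $X$, which is exactly the conclusion; so the whole problem is to produce a single $\lambda$-coloring of $[\lambda]^\omega$ that is \emph{panchromatic} on every $X$ of size $\lambda$.

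Concretely, I would fix a well-ordering of all requirements and define $f$ on a growing partial domain. At a given stage I consult the current requirement $R_{X,\gamma}$: if some set already in the domain of $f$ is a subset of $X$ and has already received color $\gamma$, the requirement is met and I do nothing; otherwise I choose a ``fresh'' $a\in[X]^\omega$ not yet in the domain and set $f(a)=\gamma$. Because distinct requirements carrying the \emph{same} color may legitimately share a witness, no conflicting assignment is ever forced, and at the end I extend $f$ to the remainder of $[\lambda]^\omega$ arbitrarily, say by the constant value $0$.

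The hard part — and what makes this a genuine theorem rather than pure bookkeeping — is guaranteeing that a fresh witness is available when one is needed. The obstruction is cardinal-arithmetic: each $[X]^\omega$ has size $\lambda^{\aleph_0}$, and this can be \emph{strictly smaller} than the number $|[\lambda]^\lambda|=2^\lambda$ of requirements (for instance $\aleph_1^{\aleph_0}=\aleph_1<2^{\aleph_1}$ under CH). Consequently one cannot simply attach pairwise-distinct witnesses to all requirements, and a careless enumeration might flood some $[X]^\omega$ with ``wrong'' colors before the requirement for a missing color $\gamma$ is ever reached. The naive recursion therefore does not go through on cardinality grounds alone, and this is the main difficulty to be overcome.

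The resolution must use structural information about countable subsets of $\lambda$ rather than brute counting. The natural move is to organize the construction around the sets $X$ and to arrange that each color class is \emph{unavoidable} — that it meets $[X]^\omega$ for every full-size $X$ — exploiting that $\lambda^{\aleph_0}\ge\lambda$ so that an individual $X$ always has room for one witness per color. Carrying this out rigorously is where the real work lies, and I expect it to require splitting into the cases $\mathrm{cf}(\lambda)=\omega$ and $\mathrm{cf}(\lambda)>\omega$ and using an explicit encoding of ordinals $\gamma<\lambda$ by countable subsets of $X$ (for example via fixed cofinal $\omega$-sequences and a pairing function on $\lambda$), so that every $X$ of size $\lambda$ is forced to contain a witness of each color. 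This structural encoding, not the recursive framework, is the crux of the argument.
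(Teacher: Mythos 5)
The paper offers no proof of this lemma to compare against---it explicitly declines, saying ``We shall not prove this lemma but merely note that it makes use of AC.'' Judged on its own terms, your proposal contains a genuine gap, and the gap is the entire theorem. The framework you set up is sound as far as it goes: the reduction to infinite $\lambda$ is fine, indexing requirements by pairs $(X,\gamma)$ with $X\in[\lambda]^\lambda$, $\gamma<\lambda$ is the right target, and your cardinality analysis is correct---there are $2^\lambda$ requirements but each $[X]^\omega$ has only $\lambda^{\aleph_0}$ members, and $\lambda^{\aleph_0}<2^\lambda$ is consistent (your CH example is right), so the greedy recursion genuinely fails. But at exactly that point the proposal stops proving and starts forecasting: ``carrying this out rigorously is where the real work lies'' is an admission that the construction which would constitute the theorem has not been exhibited. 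What follows it---organize around the sets $X$, make each color class ``unavoidable,'' find an encoding so that every full-size $X$ ``is forced to contain a witness of each color''---is a wish list, not an argument. You supply no mechanism by which an \emph{arbitrary} $X\in[\lambda]^\lambda$ would be forced to decode every color from a fixed system of $\omega$-sequences and pairing functions; note also that when $\mathrm{cf}(\lambda)>\omega$ there are no $\omega$-sequences cofinal in $\lambda$ at all, so it is unclear what the proposed scaffolding would even be.

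For reference, the actual Erd\H{o}s--Hajnal argument is an induction on the cardinal $\lambda$, and your greedy recursion is essentially just its base case: for $\lambda=\omega$ the counting works out, since there are only $2^{\aleph_0}$ requirements and each $[y]^\omega$ has exactly $2^{\aleph_0}$ members. For larger $\lambda$ one does not attack the requirements directly; instead one uses AC to fix bijections between each ordinal $\alpha<\lambda$ and its cardinality, and uses these to transfer and glue together $\omega$-J\'onsson functions for smaller cardinals supplied by the induction hypothesis, with separate treatments according to whether $\lambda$ is a successor, a limit of uncountable cofinality, or of cofinality $\omega$---this last case being precisely the one Kunen's proof needs, since there $\lambda$ is the supremum of the critical sequence. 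So the missing ``structural information'' is an induction on cardinals plus AC-chosen bijections, not a uniform coding trick: your proposal diagnoses the obstruction accurately but does not overcome it.
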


We shall not prove this lemma but merely note that it makes use of AC and use it in the following proof of Kunen's Inconsistency theorem.

\begin{proof}
Let $j:V\to V$ be an elementary embedding. Note that for elementary embeddings $j:V\to M$ in general, $j$ is not onto. That is we embed the universe within a subset of the target model $M$. However, the requirement of truth-preservation is still relative to all of $M$; that is, all the same formulas must hold in the image of $j$ when we quantify over all of $M$. Let $\kappa=crit(j)$. We can define the \textit{critical sequence} of $j$ to be the sequence obtained by iterating $j$ on $\kappa$. If we take the supremum of the critical sequence, we obtain a fixed point of $j$, which we call $\lambda$. That is, for $\lambda = sup(\{j^n(\kappa) : n\in\omega\})$, we have by elementary equivalence that $j(\lambda) = j(sup(\{j^n(\kappa) : n\in\omega\})) = sup(\{j^{n+1}(\kappa) : n\in\omega\}) = sup(\{j^n(\kappa) : n\in\omega\}) = \lambda$, where the second equality holds because $j$ is continuous at limit ordinals.

So, we have a fixed point $\lambda$ that is above the first non-fixed point $\kappa$. Since $\lambda$ is a fixed point and $j$ is increasing, $j``\lambda \subset \lambda$. We will show that $j``\lambda \notin M$. But $j$ is a (proper class) function and thus for definable $j$, $j``\lambda$ is a set (in V) by the Replacement Axiom, and for arbitrary $j$, $j``\lambda$ is a set by an analogue of the Replacement Axiom, which we must assume in one of various ways.\footnote{See the ensuing metamathematical discussion following the proof. The simplest way to do this, though not necessarily the most natural or economical in terms of consistency strength, would simply be to augment  ZFC by Replacement for elementary embeddings and then work in the resulting theory, which we may call $ZFC(j)$.} Therefore it will follow that $M\neq V$.

To show that $j``\lambda \notin M$, we exploit the $\omega$-Jónsson property. Let $f$ be $\omega$-Jónsson for $\lambda$. The $\omega$-Jónsson property is preserved by $j$ since $j$ is an elementary embedding, so $j(f)$ is $\omega$-Jónsson for $j(\lambda)$, which is $\lambda$. Furthermore, since $j$ is injective, $j``\lambda$ is a $\lambda$-sized subset of $\lambda$. So, in particular, $j``\lambda$ is a full-cardinality subset of $\lambda$, and thus the hypotheses of the Erdós-Hajnal lemma are satisfied. Thus, $j(f)``[j``\lambda]^{\omega}$ ought to be $\lambda$. But instead we will show that  $j(f)``[j``\lambda]^{\omega} \subseteq j``\lambda$, which is certainly strictly smaller than $\lambda$ since in particular $\kappa$, as the critical point, cannot be in the image of $j$, and hence $\kappa \in \lambda\setminus j``\lambda$.

Thus, let $s\in [j``\lambda]^\omega$. Then we can take a preimage $t\in[\lambda]^\omega$ such that $j(t)=s$. Now $j(f)(s)=j(f)j(t)$, which by the elementary equivalence of $j$ is equal to $j(f(t))$. But recalling that the codomain of f is $\lambda$, $j(f(t))\in j``\lambda$, as desired.

\end{proof}

To summarize alternatively, it can be shown (by taking $\omega$ iterates from any given point), that any elementary embedding $j: V\to V$ has arbitrarily large fixed points. But below any fixed point, as a contingency of the $\omega$-Jónsson property, $j$ must be onto. Therefore $j$ must be onto V, and thus any purported elementary embedding $j$ must in fact be trivial.

As a metamathematical aside, observe that since j is a class and quantification over classes cannot be formalized in ZFC, this proof of Kunen's theorem, which makes a statement about \textit{all} such j, cannot be regarded as a single proof in ZFC but rather only as an infinite schema of proofs, one for each j. Hamkins points out that, formulated as such in ZFC, this only rules out \textit{definable} elementary embeddings from V to V. But if this were our only goal, a simpler argument that does not depend on infinitary combinatorics nor even the axiom of choice is available. Roughly, it states that the concept of being a Reinhardt cardinal cannot be expressible in first-order language because if it were, then letting $\kappa$ denote the least Reinhardt cardinal (i.e., minimal critical point) relative to the set of possible parameters defining a nontrivial embedding, $j(\kappa)$ must also satisfy this definition, contradicting $\kappa < j(\kappa)$.\footnote{Understood at approximately this level, this result was for many years a ``folk theorem" of set theory until Suzuki published it formally in 1999 by showing rigorously that this definition is first-order expressible.} For this reason, Kunen intended for his proof to be formalized in Kelley-Morse (KM) set theory, which unlike ZFC allows for quantification over classes. However, KM is slightly stronger than ZFC, as it is capable of proving $Con(ZFC), Con(ZFC + Con(ZFC)),$ and so on. Thus, it would actually be most desirable to formalize the Kunen inconsistency result in von Neumann-Bernays-Gödel (NBG) set theory, which is similar to KM but equiconsistent with ZFC because while it allows for classes, it differs from KM in the respect that it only allows classes to be defined by formulas with quantifiers that range over sets. With a few more technical difficulties, Hamkins shows that this formalization in NBG is possible—for more on this, see [6]. Thus, viewed in full generality, Kunen's inconsistency theorem can indeed be seen to rule out any nontrivial embedding from the set-theoretic universe to itself. It merits mention that still other proofs of Kunen's inconsistency theorem are possible; for more see [9].

It may appear that we have walked into a faux pas by continuing to refer to Reinhardt's concept as a ``cardinal" at all if it is inconsistent. Perhaps the reason we do is that is it still an important open question whether Reinhardt cardinals are inconsistent with ZF alone (the Erdös-Hajnal lemma makes crucial use of AC). Along these same lines, it is also worth noting that despite the inconsistency of Reinhardt cardinals, even stronger large cardinal notions exist: for instance, \textbf{Berkeley cardinals}, proposed by Woodin during his tenure at the university of the same name, are incompatible even with the axiom of countable choice (but potentially are consistent with ZF).\footnote{Question: What do you call someone who roots for both Cal and Stanford? Answer: A Berkeley cardinal; they're wildly inconsistent.}

\section{Conceptualizing the large cardinal hierarchy}

Much of the confusion about the large cardinal hierarchy stems from rampant less-than-careful wording. There are, I believe, at least three distinct important notions that must be clarified when speaking of the large cardinal hierarchy: relative consistency strength, direct implication, and questions of cardinality.\footnote{It should not go without mention that there is also the important question of \textit{interpretability strength}, although this is beyond the scope of this paper.} However, the relationship between these three concepts is at best murky. We are primed by the word “large” to think of cardinality, but first of all, it is not even clear what the proper notion of cardinality would be in the context of the large cardinal hierarchy. This is because, above a large cardinal of type A, there are many cardinals that are not of type A (and many that are not even large cardinals at all). Thus, perhaps the main useful question we can ask with regard to cardinality is where the least cardinal of type A falls. Of the three notions, relative consistently strength is probably the most important and most frequently discussed. (If not specified, one should assume that comparative statements about large cardinals are referring to consistency strength.) We say that a large cardinal of type A is of greater relative consistency strength than a large cardinal of type B if Con(ZFC + ``there exists a cardinal of type A") $\vdash$ Con(ZFC + ``there exists a cardinal of type B"). Statements about relative consistency strength can occasionally be strengthened to statements of direct implication, which take the form ``every large cardinal of type A is a large cardinal of type B." To give another example of direct implication, it shall be useful to define strongly compact and weakly compact cardinals. However, to do so, we must first introduce a new logical concept.

Roughly speaking, we introduce the notion of an \textit{infinitary language} $L_{\lambda\mu}$ where we allow conjunctions and disjunctions of $<\lambda$ formulas and universal and existential quantifications of $<\mu$ variables. Thus, $L_{\omega\omega}$ is just the language of our ordinary first-order logic, for which the Compactness Theorem holds (if all finite subsets of a set of sentences $\Sigma$ have a model, then $\Sigma$ has a model.) Generalizing this notion we get the following:

\begin{defn}
$\kappa$ is strongly compact iff: any collection of $L_{\kappa\kappa}$ sentences, if $\kappa$-satisfiable, is satisfiable.
\end{defn}

\begin{defn}
$\kappa$ is weakly compact iff: any collection of $L_{\kappa\kappa}$ sentences using at most $\kappa$ non-logical symbols, if $\kappa$-satisfiable, is satisfiable.
\end{defn}

\begin{thm}
Every strongly compact cardinal is measurable.
\end{thm}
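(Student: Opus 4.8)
The plan is to deduce measurability from strong compactness by using the compactness of $L_{\kappa\kappa}$ to manufacture a nonprincipal $\kappa$-complete ultrafilter on $\kappa$. The central step is an extension principle: I would first show that if $\kappa$ is strongly compact, then every proper $\kappa$-complete filter $F$ on any set $S$ can be extended to a $\kappa$-complete ultrafilter on $S$. Measurability then follows by applying this to $S = \kappa$ together with the filter $F = \{X \subseteq \kappa : |\kappa \setminus X| < \kappa\}$ of sets with small complement, whose extension is automatically nonprincipal, since each co-singleton $\kappa \setminus \{\alpha\}$ lies in $F$ and hence no singleton can belong to the resulting ultrafilter.

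To prove the extension principle I would work in a signature with a unary predicate $\dot P_X$ for every $X \subseteq S$ and a single fresh constant $c$, letting $\mathfrak{A}$ be the structure on $S$ interpreting each $\dot P_X$ as $X$. I would then consider the set $\Sigma$ of $L_{\kappa\kappa}$-sentences consisting of every $L_{\kappa\kappa}$-sentence true in $\mathfrak{A}$ (in which $c$ does not occur) together with $\dot P_X(c)$ for each $X \in F$. The key point is that $\Sigma$ is $\kappa$-satisfiable: any subcollection of size $<\kappa$ constrains $c$ by fewer than $\kappa$ requirements $\dot P_X(c)$, and since $F$ is $\kappa$-complete the intersection of the corresponding $X$'s lies in $F$ and so is nonempty, whence interpreting $c$ as any of its elements makes $\mathfrak{A}$ a model. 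Strong compactness then yields a genuine model $\mathfrak{B}$, and I would set $D = \{X \subseteq S : \mathfrak{B} \models \dot P_X(c)\}$.

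It then remains to check that $D$ is a $\kappa$-complete ultrafilter extending $F$, and here the mechanism is exactly the truth-transfer familiar from {\L}o\'s's theorem: each required closure property is encoded by a sentence true in $\mathfrak{A}$ and hence in $\mathfrak{B}$. For instance $\forall x(\dot P_X(x) \leftrightarrow \neg \dot P_{S\setminus X}(x))$ gives the ultrafilter property, while for $\kappa$-completeness I would use, for each $\gamma < \kappa$ and family $\{X_i\}_{i<\gamma}$, the genuinely infinitary $L_{\kappa\kappa}$-sentence $\forall x\bigl(\bigwedge_{i<\gamma}\dot P_{X_i}(x) \to \dot P_{\bigcap_i X_i}(x)\bigr)$, which holds in $\mathfrak{A}$ and so transfers to $\mathfrak{B}$; applying it to $c$ shows $\bigcap_i X_i \in D$. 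This is precisely where strong compactness, rather than ordinary first-order compactness, is essential, since the completeness of $D$ is read off from infinitary sentences.

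The main obstacle I anticipate is not the transfer argument but the regularity of $\kappa$. The filter $F$ above is $\kappa$-complete only when $\kappa$ is regular, since otherwise a union of fewer than $\kappa$ small complements can exhaust $\kappa$; and in fact a nonprincipal $\kappa$-complete ultrafilter on $\kappa$ cannot exist unless $\kappa$ is regular, as a union of $\mathrm{cf}(\kappa)$ many measure-zero pieces would otherwise force $\kappa$ itself to have measure zero. So a complete proof must first establish that a strongly compact cardinal is regular. This does \emph{not} follow from a naive compactness argument, because singularity of low cofinality is already witnessed by a subcollection of size $<\kappa$ and is therefore invisible to $\kappa$-satisfiability; I would instead obtain it by invoking (or separately proving) that strongly compact cardinals are inaccessible, and then feed that regularity into the construction above.
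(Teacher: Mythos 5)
The paper states this theorem without proof, so there is no in-paper argument to compare against; judged on its own merits, your proposal is correct and is essentially the standard textbook proof. The key lemma you isolate (strong compactness implies every proper $\kappa$-complete filter on any set extends to a $\kappa$-complete ultrafilter) is exactly the right decomposition, and your proof of it is sound: $\Sigma$ is $\kappa$-satisfiable because $F$ is proper and $\kappa$-complete, and every required property of $D$ --- upward closure, the ultra property, and crucially $\kappa$-completeness --- is read off from sentences in the full $L_{\kappa\kappa}$-theory of $\mathfrak{A}$, which transfer to $\mathfrak{B}$. Note that your signature has $2^{|S|}$-many predicates, which is legitimate precisely because the paper's definition of strong compactness places no bound on the number of non-logical symbols; this is exactly where the same argument fails for weak compactness. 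The application to the filter of co-$({<}\kappa)$ sets on $\kappa$ and the nonprincipality observation are also correct, as is your remark that regularity is genuinely needed (a $\kappa$-complete nonprincipal ultrafilter on a singular $\kappa$ is impossible).

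The one misconception is your claim that regularity ``does not follow from a naive compactness argument'' and must therefore be imported from the separate fact that strongly compact cardinals are inaccessible. Invoking that fact is harmless, since it is true and standard --- but be aware that it cannot come from anywhere \emph{other} than a compactness argument: the paper's definition of strong compactness hands you nothing but the compactness property, so any proof of regularity (or inaccessibility) must use it. The argument you believe doesn't exist does exist; it just requires packaging the singularity into a single nested sentence rather than witnessing it pointwise. If $\kappa = \sup_{i<\lambda}\alpha_i$ with $\lambda < \kappa$ and each $\alpha_i < \kappa$, take constants $c_\beta$ ($\beta < \kappa$) and $d$, and let
\[
\Sigma \;=\; \Bigl\{\forall x \bigvee_{i<\lambda}\,\bigvee_{\beta<\alpha_i} (x = c_\beta)\Bigr\} \;\cup\; \{\, d \neq c_\beta : \beta < \kappa \,\}.
\]
The nested disjunction is a legal $L_{\kappa\kappa}$ formula exactly because $\kappa$ is singular (the outer disjunction has $\lambda < \kappa$ disjuncts, each inner one $\alpha_i < \kappa$), even though it ranges over $\kappa$-many constants in total. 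Any subset of $\Sigma$ of size $<\kappa$ is satisfiable in the structure with universe $\kappa$, interpreting $c_\beta$ as $\beta$ and $d$ as any ordinal whose inequality is not mentioned; but $\Sigma$ itself is unsatisfiable, since a model would force $d$ to equal some $c_\beta$ while differing from all of them. So singularity is not invisible to $\kappa$-satisfiability, and with this lemma in place of your citation, your proof becomes complete and self-contained.
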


\begin{thm}
Every measurable cardinal is weakly compact.
\end{thm}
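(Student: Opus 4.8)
The plan is to prove that every measurable cardinal $\kappa$ is weakly compact by establishing the satisfiability criterion in the definition of weak compactness. I would exploit the elementary embedding characterization of measurability from Proposition~6.6: since $\kappa$ is measurable, there is a transitive class $M$ and an elementary embedding $j: V\to M$ with $\mathrm{crit}(j)=\kappa$. The key structural fact I would need is that $M$ is closed under $\kappa$-sequences; more precisely, because $\kappa$ is the critical point of an ultrapower embedding by a $\kappa$-complete ultrafilter, one has $^\kappa M\subseteq M$ (the target model agrees with $V$ on sequences of length less than $j(\kappa)$, and certainly on those of length $\kappa$). This closure is exactly the leverage that lets a set of $L_{\kappa\kappa}$-sentences survive passage through $j$.

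First I would set up the combinatorial situation: let $\Sigma$ be a collection of $L_{\kappa\kappa}$-sentences using at most $\kappa$ non-logical symbols, and suppose $\Sigma$ is $\kappa$-satisfiable, meaning every subcollection of size strictly less than $\kappa$ has a model. I would apply $j$ to $\Sigma$. Because $\Sigma$ has size at most $\kappa$, I can enumerate it as $\langle \varphi_\alpha : \alpha<\kappa\rangle$ and consider $j(\Sigma)$ in $M$. The crucial move is to examine the initial segment $j``\Sigma = \{j(\varphi_\alpha):\alpha<\kappa\}$, which by $\kappa$-closure of $M$ is an element of $M$ and forms a subcollection of $j(\Sigma)$. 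Since $\mathrm{crit}(j)=\kappa$, each sentence in $\Sigma$ is a set of rank below $\kappa$ and is therefore individually fixed up to the action of $j$ on its symbols, so $j``\Sigma$ sits inside $j(\Sigma)$ as a piece of size $j(\kappa)$'s predecessor level. The plan is then to argue, by elementarity, that $j(\Sigma)$ is $j(\kappa)$-satisfiable in $M$, and that the subcollection $j``\Sigma$ has size $\kappa < j(\kappa)$ from $M$'s point of view, hence has a model in $M$.

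Next I would pull this model back down. The statement ``$\Sigma$ is satisfiable'' is a first-order assertion (relative to $\Sigma$ as a parameter) about the existence of a structure, and by elementarity $V\models(\Sigma\text{ is satisfiable})$ iff $M\models(j(\Sigma)\text{ is satisfiable})$; but I want to conclude satisfiability of $\Sigma$ itself, so the real content is that a model for $j``\Sigma$ found in $M$ restricts to a model for $\Sigma$ in $V$. Here the identification of $j``\Sigma$ with (the $j$-image of) $\Sigma$, together with the fact that $j$ acts as the identity on the hereditarily-small syntactic material below $\kappa$, lets me transfer the $M$-model back to a genuine model of $\Sigma$ in $V$.

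\textbf{The hard part} will be managing the bookkeeping of exactly how $j$ acts on the infinitary syntax: an $L_{\kappa\kappa}$-sentence may involve conjunctions and quantifier-blocks of size arbitrarily close to $\kappa$, so such a sentence need \emph{not} have rank below $\kappa$ and need not be fixed by $j$. The delicate point is therefore showing that the $\kappa$-satisfiability hypothesis transfers correctly under $j$ — specifically, that $j(\Sigma)$ restricted to any $<j(\kappa)$-sized subfamily (in particular $j``\Sigma$, which $M$ sees as having size $\kappa$) is satisfiable in $M$, using that $\kappa$-satisfiability of $\Sigma$ in $V$ elementarily becomes $j(\kappa)$-satisfiability of $j(\Sigma)$ in $M$. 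Making this rank and closure analysis precise — in particular verifying that $j``\Sigma\in M$ and that it genuinely constitutes a $<j(\kappa)$-subcollection of $j(\Sigma)$ from within $M$ — is the technical crux on which the whole argument turns.
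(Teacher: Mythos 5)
The paper states this theorem without proof, so there is no internal argument to measure yours against; judged on its own terms, your proposal is the standard ultrapower-embedding proof (the one in Kanamori), and its skeleton is correct: pass to the ultrapower embedding $j:V\to M$ by a $\kappa$-complete nonprincipal ultrafilter on $\kappa$, so that ${}^\kappa M\subseteq M$; by elementarity $j(\Sigma)$ is $j(\kappa)$-satisfiable in $M$; the set $j``\Sigma$ belongs to $M$ by $\kappa$-closure and is a subcollection of $j(\Sigma)$ that $M$ sees as having size $\kappa < j(\kappa)$; hence $M$ contains a model of it, which one then pulls back to $V$.

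The trouble is that on the very point you call the crux, your write-up asserts both a statement and its negation, and only one of them lets the proof close. In your second paragraph you say each sentence of $\Sigma$ has rank below $\kappa$ and is fixed by $j$; in your final paragraph you say such a sentence ``need not have rank below $\kappa$ and need not be fixed by $j$.'' The former is the true statement, and it is indispensable: if sentences were genuinely moved by $j$, a model of $j``\Sigma$ would be a model of $\{j(\varphi):\varphi\in\Sigma\}$, not of $\Sigma$, and your pull-back step would collapse. To prove it, code the (at most $\kappa$) non-logical symbols and the variables as ordinals below $\kappa$ and induct on the construction of $L_{\kappa\kappa}$-formulas: a conjunction of fewer than $\kappa$ formulas, or a quantification over fewer than $\kappa$ variables, each constituent of hereditary cardinality $<\kappa$, again has hereditary cardinality $<\kappa$ because $\kappa$ is regular. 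Hence every formula lies in $H_\kappa = V_\kappa$, and $j$ is the identity on $V_\kappa$ since $\mathrm{crit}(j)=\kappa$; therefore $j``\Sigma=\Sigma$, so $\Sigma\subseteq j(\Sigma)$, and the model of $\Sigma$ that $M$ supplies is a genuine model of $\Sigma$ in $V$ (satisfaction for $L_{\kappa\kappa}$-sentences is absolute between $V$ and the $\kappa$-closed transitive class $M$). The same computation, $|V_\kappa|=\kappa$ for $\kappa$ inaccessible, is also what justifies your unargued claim that $|\Sigma|\le\kappa$. Finally, delete the parenthetical claim that $M$ ``agrees with $V$ on sequences of length less than $j(\kappa)$'': this is false, since $2^\kappa<j(\kappa)$ yet $M$ is not closed under $2^\kappa$-sequences (otherwise the witnessing ultrafilter $U$, a subset of $\mathcal{P}(\kappa)\subseteq M$ of size $2^\kappa$, would itself lie in $M$, which it cannot). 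What is true, and all you need, is ${}^\kappa M\subseteq M$ for the ultrapower embedding.
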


Furthermore, any measurable cardinal has many weakly compact cardinals below it. Magidor showed that the size of the least strongly compact cardinal is independent of ZFC, but it need not be greater than the least measurable cardinal.

We give one more definition, that of a Woodin cardinal, albeit with limited motivation at this time. However, Woodin cardinals are crucially important and among the most prominent of all large cardinal hypotheses. We will see one reason for this in the section on determinacy.

\begin{defn}
A cardinal $\kappa$ is \textbf{Woodin} if for any $f\in$ $^\kappa \kappa$, there is an $\alpha < \kappa$ with $f''\alpha\subseteq \alpha$ and a $j: V\to M$ with crit(j) $= \alpha$ such that $V_{j(f)(\alpha)}\subseteq M$.
\end{defn}

Woodin cardinals are much stronger than measurable cardinals in terms of consistency strength. However, a Woodin cardinal need not be measurable, and the least Woodin cardinal is known to not even be weakly compact. The relationship between huge cardinals and supercompact cardinals, which we defined earlier, is similar: huge cardinals are stronger in consistency strength, but assuming both exist, the least huge cardinal is of lesser cardinality than the least supercompact cardinal.

Seeming pathologies like this pervade the large cardinal hierarchy, which is why diagrams like the one below are not particularly useful without further clarification.

\begin{figure}
    \centering
    \includegraphics[width=12cm]{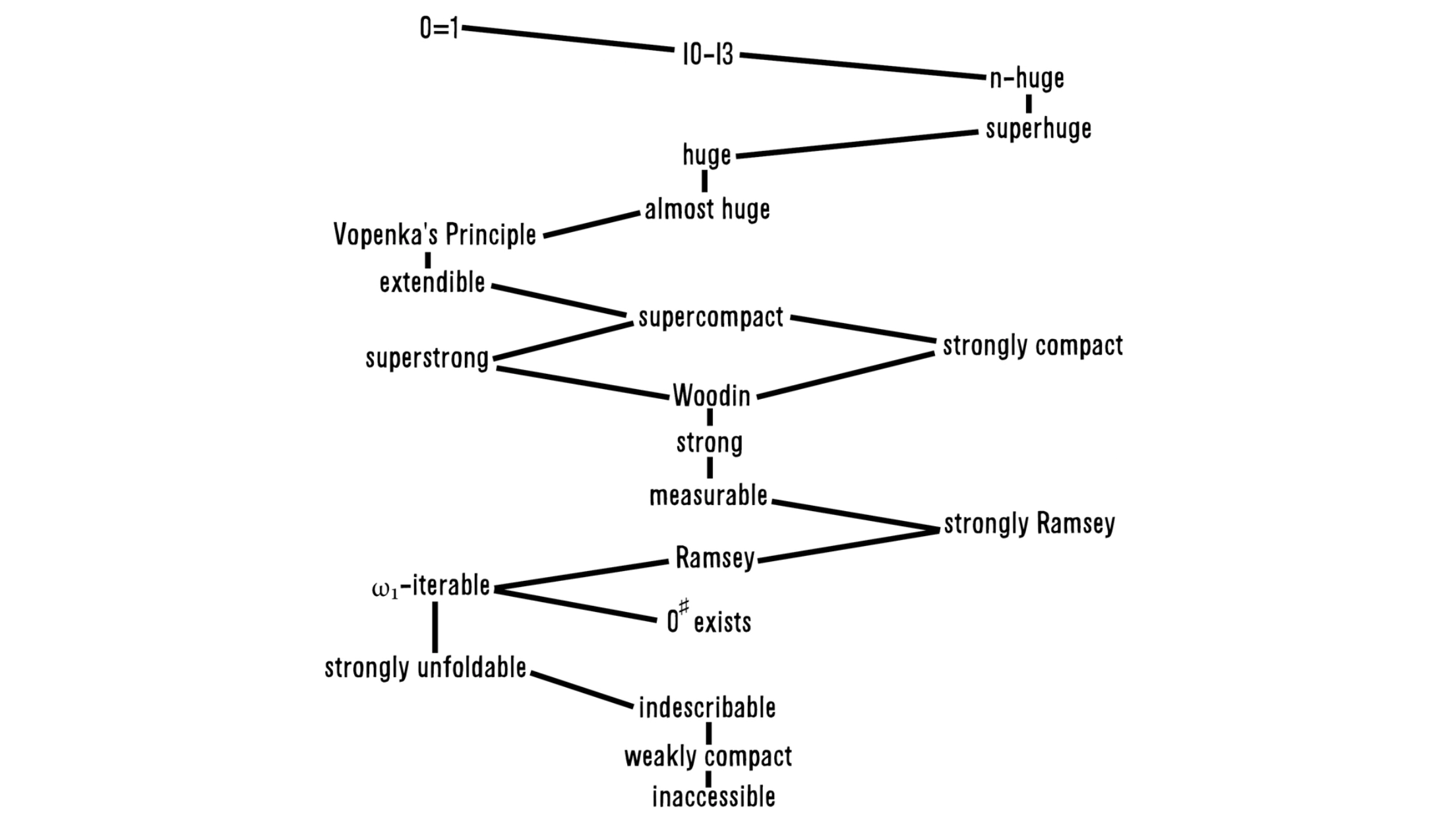}
    \caption{Large Cardinal Relationships}
    \label{Figure 6.7}
\end{figure}

Figure 1 is best understood as mapping out the relationships in terms of relative consistency strength. Note that by the \textit{principle of explosion}, anything follows from a contradiction, which explains the presence of the statement $0=1$ at the top. However, many questions of direct implication and cardinality are suppressed by the diagram, and even the known relationships in terms of consistency strength are not made entirely clear. For instance, supercompactness is at least as strong of an assumption as strong compactness, but it is an open question whether the two are in fact equiconsistent (and many set theorists conjecture this to be the case). We shall not give the definition of a \textit{strong cardinal}, but the reader should know that one salient property of strong cardinals is that if it is consistent for one to exist, then it is consistent for there to be no inaccessibles above some strong cardinal. The relationship between strongness and strong compactness is quite unclear.

One may ask what the strongest large cardinal axiom is.\footnote{With apologies for the many different uses of the word ``strong." Here, of course, we are referring to strength in terms of relative consistency.} Even with the presence of limiting cases like Kunen's inconsistency, it seems as though we may always come up with stronger and stronger large cardinal hypotheses. However, at present the strongest large cardinal hypotheses that have been widely considered are the \textit{rank-into-rank} axioms, the strongest of which is known as Axiom $I_0$. This is essentially the strongest large cardinal axiom not known to be inconsistent with the axiom of choice.

\begin{defn}
\textbf{Axiom $\mathbf{I_0}$} asserts that there exists a (nontrivial) elementary embedding $j: L(V_{\delta+1})\to L(V_{\delta+1})$ such that $\kappa$ is the critical point of j and $\kappa < \delta$.
\end{defn}

\section{Determinacy}

Having just taken care to not (knowingly) contradict AC, we now enter the world of determinacy, or infinite game theory, where AC is the first thing to go out the window.

 We imagine a two-player infinite game $G_X(A)$ for $A\subseteq$ $^\omega X$, where player I and player II alternate between picking elements of X. In the simplest case, let $X=\omega$, so each player picks natural numbers. If the resulting infinite sequence of natural numbers is in $A\subseteq$ $^\omega\omega$, player I wins; otherwise player 2 wins. If either player has a winning strategy, then we say the set A of real numbers\footnote{Technically A consists of infinite sequences of natural numbers, which can be thought of as real numbers and allows us to circumvent the more involved task of precisely defining a real number.} is \textbf{determined}. The following is basic; for a proof see [9].
 
 \begin{prop}
        If $A\subseteq$ $^\omega \omega$ is either open or closed, then $G_X(A)$ is determined.
 \end{prop}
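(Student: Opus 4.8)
The plan is to prove the classical Gale--Stewart dichotomy, giving the open case in full and obtaining the closed case by a dual argument. Since the proposition writes $A \subseteq {}^\omega\omega$, I work in Baire space with $X = \omega$, and I identify a position with a finite sequence $s$, declaring it Player I's turn when the length $|s|$ is even and Player II's turn when $|s|$ is odd (so Player I moves first from the empty position). I write $[s]$ for the basic open set of all $x \in {}^\omega\omega$ extending $s$, and $s^\frown a$ for the position obtained by appending the move $a$. To establish determinacy it suffices to show that whenever one player lacks a winning strategy from the empty position, the other player has one.

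For the open case, suppose $A$ is open and that Player I has no winning strategy. First I would introduce the auxiliary notion that a position is \emph{good for II}, meaning Player I has no winning strategy in the subgame beginning there; by hypothesis the empty position is good for II. By unwinding the definition of a winning strategy I would verify two combinatorial facts: (i) if a position where it is I's turn is good for II, then \emph{every} move by I leads again to a position good for II, since otherwise I could play the offending move and win; and (ii) if a position where it is II's turn is good for II, then \emph{some} move by II leads to a position good for II, since otherwise every reply of II would be winning for I, assembling into a winning strategy for I. These two facts let me define a strategy for Player II that maintains the invariant ``the current position is good for II'': at II's turns she plays a move furnished by (ii) (concretely, the least such natural number, so that no choice principle is invoked), and at I's turns the invariant persists automatically by (i).

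Next I would show that any play $x$ consistent with this strategy is a loss for I, i.e.\ $x \notin A$; this is the one and only point where the topology is used. If $x$ were in the open set $A$, some initial segment $s$ of $x$ would satisfy $[s] \subseteq A$, so that \emph{every} extension of $s$ lies in $A$. But then $s$ is already a winning position for I, hence not good for II, contradicting the fact that every initial segment of $x$ (being a position visited during the play) is good for II. Therefore $x \notin A$, Player II wins, and Player II has a winning strategy, which settles the open case.

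Finally, for the closed case I would run the mirror image: if $A$ is closed then ${}^\omega\omega \setminus A$ is open, and assuming Player II has no winning strategy I would call a position \emph{good for I} when Player II has no winning strategy from it. The analogues of (i) and (ii) with the roles exchanged produce a strategy for Player I preserving goodness for I, and openness of the complement forces any resulting play into $A$ by the same argument applied to ${}^\omega\omega \setminus A$. The main obstacle is really only setting up the ``good for'' bookkeeping cleanly and checking (i) and (ii); once that machinery is in place the topological hypothesis enters in a single clean step, and the proof goes through in ZF alone.
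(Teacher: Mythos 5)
Your argument is the classical Gale--Stewart proof, and it is correct as a proof of the proposition in the paper's ambient theory ZFC. Note that the paper itself gives no proof at all, deferring to Kanamori [9], so there is no internal argument to compare against; the unraveling you describe (tracking positions from which Player I has no winning strategy, verifying the two closure facts, and invoking the topology exactly once) is essentially the standard argument found in such references. In substance, you have supplied the proof that the paper outsources.

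One caveat, since you raise the point explicitly: your closing claim that ``the proof goes through in ZF alone'' is not accurate for the proof as written. The least-move device removes choice from the \emph{definition} of II's strategy, but choice enters earlier, in the proof of fact (ii): its contrapositive requires assembling a single winning strategy for I from a family $(\sigma_b)_{b\in\omega}$ of winning strategies, one for each possible reply $b$ of Player II, and selecting such a family is an instance of countable choice for sets of reals (a strategy is essentially an element of $^\omega\omega$). The mirrored fact (ii') in your closed case has the same issue; by contrast, (i) and (i') are choice-free, since there a \emph{single} winning strategy from $s^\frown a$ extends to one from $s$. Open determinacy for games on $\omega$ is indeed a theorem of ZF, but the choice-free proof replaces ``Player I has no winning strategy from $s$'' with ``$s$ lies outside the least class $W$ of positions from which I can force the play into $A$ in finitely many moves,'' where $W$ is given by a monotone induction: $s\in W$ if $[s]\subseteq A$, or if it is I's turn at $s$ and some move leads into $W$, or if it is II's turn at $s$ and every move leads into $W$. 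The fixed-point property of $W$ yields both closure facts without any choice, the rank-decreasing strategy for I is canonical, and your openness step goes through verbatim. Under ZFC, as in this paper, your proof stands as written; just temper the ZF remark or switch to the inductive formulation.
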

 
\begin{thm}
(ZFC) There exists a set of reals which is not determined.
\end{thm}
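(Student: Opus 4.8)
The plan is to use the Axiom of Choice to build, by transfinite recursion of length $\mathfrak{c} = 2^{\aleph_0}$, a set $A \subseteq {}^\omega\omega$ that simultaneously defeats every possible strategy for both players. The governing idea is a diagonalization against strategies: a winning strategy would have to beat \emph{all} of the opponent's responses, so it suffices to arrange that each strategy fails against at least one play. There is enough room to do this because each strategy admits continuum-many consistent plays, while at any intermediate stage only fewer than continuum-many reals have been committed.

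First I would carry out the bookkeeping. A strategy for either player is a function from the (countably many) finite sequences of naturals of the appropriate length-parity into $\omega$; hence there are $\aleph_0^{\aleph_0} = 2^{\aleph_0}$ strategies for each player, and $2^{\aleph_0}$ in total. Using AC, well-order all strategies (for both players together) as $\langle s_\alpha : \alpha < \mathfrak{c}\rangle$, tagging each as belonging to player I or player II. The crucial counting fact is this: for a fixed strategy $\sigma$, a play consistent with $\sigma$ is determined entirely by the opponent's countably many moves, each drawn from $\omega$; distinct sequences of opponent moves yield distinct plays, so there are exactly $2^{\aleph_0}$ plays consistent with $\sigma$.

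Next I would run the recursion, maintaining two disjoint sets $A_\alpha$ (plays awarded to player I) and $B_\alpha$ (plays awarded to player II). At stage $\alpha$, if $s_\alpha = \sigma$ is a player I strategy, I choose a play $x$ consistent with $\sigma$ that has not yet been committed to $A_\alpha \cup B_\alpha$ and place $x$ into $B$; this witnesses that $\sigma$ is not winning for I, since following $\sigma$ can still result in the loss $x \notin A$. Symmetrically, if $s_\alpha = \tau$ is a player II strategy, I pick an uncommitted play $y$ consistent with $\tau$ and place it into $A$, defeating $\tau$. Finally set $A = \bigcup_{\alpha < \mathfrak{c}} A_\alpha$, throwing the leftover reals into the complement. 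No strategy of either player is winning, so by definition $A$ is not determined.

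The main obstacle --- really the only point requiring care --- is verifying that an uncommitted consistent play is always available at stage $\alpha$. This is a pure cardinality comparison: at stage $\alpha < \mathfrak{c}$ at most $|\alpha| < \mathfrak{c}$ reals have been committed (I commit one per stage), whereas there are $2^{\aleph_0} = \mathfrak{c}$ plays consistent with $s_\alpha$, so the set of consistent-but-uncommitted plays is nonempty. Note this needs no regularity of $\mathfrak{c}$, only that every ordinal below the initial ordinal of cardinality $\mathfrak{c}$ is itself of cardinality strictly less than $\mathfrak{c}$. It is worth flagging exactly where choice enters --- both in well-ordering the strategies and, via a well-ordering of the reals, in making the stagewise selections --- since the whole point of the determinacy program, pursued in the sequel, is that this non-determined set evaporates once AC is weakened.
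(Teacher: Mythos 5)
Your proposal is correct and takes essentially the same approach as the paper's own proof: a transfinite recursion of length $2^{\aleph_0}$ that diagonalizes against a well-ordering of all strategies, placing one defeating play per stage into the opposing player's payoff set, justified by the counting fact that each strategy admits continuum-many consistent plays while fewer than continuum-many reals have been committed at any stage. The only cosmetic difference is that you merge both players' strategies into a single tagged enumeration where the paper alternates between two separate enumerations, and your cardinality bookkeeping ($|\alpha| < \mathfrak{c}$ commitments at stage $\alpha$) is in fact stated more carefully than the paper's.
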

\begin{proof}
Note that a strategy is a function from information sets to actions, and as there are only countably many of each for games of the form $G_\omega(A)$, each player has only ${\aleph_0}^{\aleph_0}=2^{\aleph_0}$ many strategies. Using the Axiom of Choice, we can well-order the possible strategies for player I as $\langle \sigma_\alpha | \alpha < 2^{\aleph_0}\rangle$ and the strategies for player II as $\langle \tau_\alpha | \alpha < 2^{\aleph_0}\rangle$. We now use a back-and-forth construction to assign certain $a_\alpha, b_\alpha\in$ $^\omega \omega$ (i.e., real numbers) to each player's winning set. To do so we employ transfinite recursion: having chosen reals $a_\beta$ and $b_\beta$ for $\beta < \alpha$, alternate between choosing $b_\alpha$ such that $b_\alpha = \sigma_\alpha * y$ for some strategy y (so that Player II obtains a way to counter against this strategy being played by Player I) and choosing $a_\alpha$ such that $a_\alpha = z * \tau_\alpha$ for some strategy z (so that Player I obtains a way to counter against this strategy being played by Player II). In each case we make sure that the real number we assign has not already been assigned, so we impose the additional requirement that $b_\alpha\notin \{a_\beta | \beta < \alpha\}$ and $a_\alpha\notin \{b_\beta | \beta < \alpha\}$. We can do this because $|\{\sigma_\alpha * y | y\in ^\omega$$\omega\}| = |\{z * \tau_\alpha | z\in ^\omega$$\omega\}| = 2^{\aleph_0}$; that is, having only made at most countably many commitments at any stage, there are always $2^{\aleph_0}$ many positions each player can still reach so there always exists some such y or z.  Thus $A=\{a_\alpha | \alpha < 2^{\aleph_0}\}$ and $B=\{b_\alpha | \alpha < 2^{\aleph_0}\}$ are disjoint, and neither player has a winning strategy for $G_\omega(A)$. So A is not determined.

\end{proof}

The \textbf{Axiom of Determinacy (AD)} is the statement that all sets of reals are determined; thus, AD is incompatible with AC. Nevertheless, studying its consequences under the ambient theory ZF can lead to new insights.

Recall that L is the smallest inner model of ZFC containing all the ordinals. Analogously, $L(\mathbb{R})$ is the smallest inner model of ZFC containing both all the ordinals and all the real numbers. Without positing large cardinals, it cannot be shown that $L(\mathbb{R})$ differs from L, but assuming sufficient large cardinal hypotheses, $L(\mathbb{R})$ is a model of AD, not AC. The following makes this precise:

\begin{thm}
Suppose there are infinitely many Woodin cardinals with a measurable cardinal above them. Then $AD^{L(\mathbb{R})}$.
\end{thm}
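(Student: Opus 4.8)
The plan is to replace the combinatorial game-theoretic problem by a structural regularity property of sets of reals---being \emph{homogeneously Suslin}---and then to use the Woodin cardinals to force that property to hold of every set of reals in $L(\mathbb{R})$. Recall that $A\subseteq {}^{\omega}\omega$ is $\kappa$-homogeneously Suslin if $A = p[T]$ for a tree $T$ on $\omega\times\mathrm{Ord}$ carrying a coherent system of $\kappa$-complete measures whose induced measure-towers are countably complete (well-founded) precisely along the branches projecting into $A$. The foundational input, due to Martin, is that \emph{every homogeneously Suslin set is determined}: in an auxiliary game the second player also plays ordinals building a branch through $T$, collapsing the payoff condition to a closed---hence Gale-Stewart determined---game, and the measures transfer a winning strategy back to the original game. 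Thus the entire theorem reduces to showing that every $A\in L(\mathbb{R})$ is homogeneously Suslin in $V$.

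First I would obtain full Projective Determinacy by the Martin-Steel method. The base case is Martin's classical result that a single measurable cardinal already makes every $\Pi^1_1$ set homogeneously Suslin. The inductive engine rests on two facts: homogeneously Suslin implies weakly homogeneously Suslin, and weak homogeneity is preserved under the real projection $\exists^{\mathbb{R}}$ (so moving from a $\Pi^1_n$ level up to the $\Sigma^1_{n+1}$ level costs nothing); and---this is where a Woodin cardinal is spent---if $\delta$ is Woodin and $A$ is weakly homogeneously Suslin via trees and measures concentrated below $\delta$, then the complement $\neg A$ is again \emph{homogeneously} Suslin, the homogeneity measures being manufactured on the Martin-Solovay tree for $\neg A$ from a system of extenders witnessing the Woodinness of $\delta$. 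Each climb up one projective level consumes one Woodin, so infinitely many Woodins (with the measurable above to seed the $\Pi^1_1$ base) make every projective set homogeneously Suslin, and Martin's theorem then yields Projective Determinacy.

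Finally, to pass from Projective Determinacy to $AD^{L(\mathbb{R})}$---the genuinely deep step, due to Woodin---I would carry the same homogeneity propagation past the finite projective levels and through the whole $L(\mathbb{R})$ hierarchy. Here the clean ``one Woodin per level'' bookkeeping breaks down, and one must instead invoke Steel's theorem that every pointclass arising in $L(\mathbb{R})$ admits a scale, and show these scales can be fed through the extender-and-Martin-Solovay machinery now organized along the finer $\Sigma_1$-gap (Wadge) stratification of $L(\mathbb{R})$. Concretely I would argue by induction on the levels $L_\alpha(\mathbb{R})$ that every set of reals definable there is homogeneously Suslin in $V$, treating the new scales that appear at the beginnings of $\Sigma_1$-gaps by the same propagation lemma, with the measurable above the Woodins supplying enough completeness to keep the towers countably complete through limit stages. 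Once every $A\in L(\mathbb{R})$ is homogeneously Suslin, Martin's determinacy theorem applies to each, and since the homogeneous representation furnishes a canonical definable winning strategy one checks that these strategies reflect into $L(\mathbb{R})$, giving $L(\mathbb{R})\models AD$. The main obstacle---and the reason this is Woodin's theorem rather than a formal consequence of Martin-Steel---is precisely this propagation of scales and homogeneity through the non-projective levels of $L(\mathbb{R})$.
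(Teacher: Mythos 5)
Your first two paragraphs are a faithful sketch of the standard Martin--Steel route to Projective Determinacy: Martin's theorem that homogeneously Suslin sets are determined, the $\Pi^1_1$ base case from the measurable above the Woodins, and the propagation step at a Woodin cardinal. (One correction there: the input to the Martin--Steel theorem is a $\delta^+$-\emph{weakly} homogeneously Suslin set, i.e.\ the completeness of the measures must \emph{exceed} the Woodin cardinal $\delta$, not be ``concentrated below'' it; the output is then $<\delta$-homogeneously Suslin.) Note also that the paper states this theorem without any proof at all, so the only comparison available is against the literature.

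The genuine gap is in your final paragraph, and it is not a bookkeeping problem: the reduction you aim for --- that every $A\in L(\mathbb{R})$ is homogeneously Suslin in $V$ --- is not a consequence of ``$\omega$ Woodin cardinals with a measurable above.'' That statement is strictly stronger than the hypothesis: the known sufficient conditions for it are on the order of a proper class of Woodin cardinals, or a limit $\lambda$ of Woodin cardinals that is simultaneously a limit of $<\lambda$-strong cardinals, and it fails in minimal models of the hypothesis you are given. The structural reason your induction stalls is that the Martin--Steel theorem consumes a weakly homogeneous representation as input, and such representations are produced by the propagation itself only at successor steps (applications of $\neg$ and $\exists^{\mathbb{R}}$). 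A pointclass arising at the beginning of a $\Sigma_1$-gap in $L(\mathbb{R})$ is not reached from lower pointclasses by finitely many such steps; Steel's scale analysis does hand you a Suslin representation there, but a scale is just a tree definable over a level of $L(\mathbb{R})$ --- it carries no homogeneity measures, and no lemma at this level of hypothesis converts ``Suslin via a definable scale, with Woodin cardinals below'' into ``(weakly) homogeneously Suslin.'' Woodin's actual proof crosses exactly this chasm by different technology: stationary tower forcing in the original argument, or, in Neeman's later treatment, genericity iterations of countable models with $\omega$ Woodin cardinals that capture each set in $L(\mathbb{R})$ via the extender algebra; both establish determinacy of each $A\in L(\mathbb{R})$ \emph{without} ever showing $A$ is homogeneously Suslin in $V$. (Your closing step is also more delicate than needed: once an integer game with payoff in $L(\mathbb{R})$ is determined in $V$, any winning strategy is coded by a real and ``$\sigma$ is winning'' is absolute, so the strategy automatically lies in and works inside $L(\mathbb{R})$; no canonicity is required.)
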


We will skip much of the theory of determinacy and fast forward to a fundamental, surprising result that will motivate some of our philosophical discussion in the next section.

\begin{thm}
(Woodin) The following theories are equiconsistent:
\begin{enumerate}
    \item ZFC + there exist infinitely many Woodin cardinals.
    \item ZF + AD
\end{enumerate}
\end{thm}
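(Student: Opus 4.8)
The plan is to prove equiconsistency by establishing the two consistency implications separately, since they are of wildly unequal difficulty: the passage from Woodin cardinals to determinacy is comparatively soft, while the reverse passage is the genuine depth of the theorem.

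For the direction $\mathrm{Con}(\text{ZFC} + \text{infinitely many Woodins}) \to \mathrm{Con}(\text{ZF} + AD)$, I would begin with a model of ZFC containing the first $\omega$ Woodin cardinals and let $\lambda$ be their supremum, so that $\lambda$ is a limit of Woodin cardinals. Rather than appeal directly to the transfer theorem stated above (which demands a measurable cardinal above the Woodins, a hypothesis of strictly greater consistency strength), I would invoke the Derived Model Theorem: the symmetric reals $\mathbb{R}^{*}$ obtained from the L\'evy collapse of the cardinals cofinal in $\lambda$ generate a derived model, and at a limit of Woodin cardinals this derived model satisfies $\text{ZF} + AD$. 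This structure then witnesses $\mathrm{Con}(\text{ZF} + AD)$, and it is the right tool precisely because it extracts determinacy on the symmetric reals without the extra measurable.

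The reverse direction, $\mathrm{Con}(\text{ZF} + AD) \to \mathrm{Con}(\text{ZFC} + \text{infinitely many Woodins})$, is the heart of the matter and the step I expect to be by far the hardest. Here I would work inside a model of $\text{ZF} + AD$, pass to its inner model $L(\mathbb{R})$ (which continues to satisfy $AD$), and then study $\mathrm{HOD}^{L(\mathbb{R})}$, the class of sets hereditarily ordinal-definable in $L(\mathbb{R})$. That $\mathrm{HOD}$ models ZFC is the easy part: it carries a definable well-ordering induced by the ordinal-definability ranking, so the Axiom of Choice holds there. The substance is to exhibit infinitely many Woodin cardinals inside this model. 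The organizing object is $\Theta$, the supremum of the ordinals that are surjective images of $\mathbb{R}$, together with the Solovay sequence of ordinals climbing to $\Theta$; the key structural theorem, which I would have to prove, is that $\Theta$ and the successive members of the Solovay sequence are Woodin cardinals in $\mathrm{HOD}^{L(\mathbb{R})}$.

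This last HOD computation is the main obstacle, and it is where essentially all the difficulty concentrates. Verifying Woodinness in $\mathrm{HOD}$ requires showing that for every function there lying below the relevant threshold one can produce a critical point together with an elementary embedding whose target absorbs the corresponding rank-initial segment, and this data is squeezed out of $AD$ only through heavy machinery: the Moschovakis Coding Lemma and the periodicity and scale-theoretic analysis of the pointclasses below $\Theta$, combined with the reflection that determinacy forces on those pointclasses. I would expect to spend the overwhelming bulk of the argument establishing this structure theory; once it is in hand, both the construction of a ZFC model with infinitely many Woodins and the soft forward direction fall out quickly.
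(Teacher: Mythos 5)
The paper itself gives no proof of this theorem---it is cited as a result of Woodin---so your sketch has to be judged on its own terms. Your forward direction is the standard and correct route, and it reflects good judgment: you rightly observe that Theorem 7.3 cannot be used here, since a measurable above infinitely many Woodin cardinals is a strictly stronger hypothesis (such a measurable $\kappa$ makes $V_\kappa$ a model of ZFC plus infinitely many Woodins, hence proves the consistency of that theory), and the Derived Model Theorem at $\lambda = \sup$ of the Woodin cardinals is exactly the tool that yields $\mathrm{Con}(\mathrm{ZFC} + \text{infinitely many Woodins}) \rightarrow \mathrm{Con}(\mathrm{ZF} + AD)$ without the extra measurable.

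The reverse direction, however, has a genuine gap. Passing to $L(\mathbb{R})$ and analyzing $\mathrm{HOD}^{L(\mathbb{R})}$ is the right frame, and it delivers Woodin's theorem that $\mathrm{HOD}^{L(\mathbb{R})} \models \mathrm{ZFC} + ``\Theta$ is a Woodin cardinal''---but that is \emph{one} Woodin cardinal. The device you propose for getting infinitely many, namely the ``successive members of the Solovay sequence,'' does not exist in $L(\mathbb{R})$: under $V = L(\mathbb{R})$ every set is ordinal definable from a real, and the usual parameter-absorption trick (fold the real parameter into the argument of the surjection) converts any surjection of $\mathbb{R}$ onto an ordinal that is OD from a real into an outright OD surjection. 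Hence $\theta_0 = \Theta$ and the Solovay sequence has length one; there are no successor members to exploit. The theorem that the $\theta_{\alpha+1}$ are Woodin in HOD concerns $AD^{+}$ models with nontrivial Solovay sequences (for instance models of $AD_{\mathbb{R}}$), whose consistency strength strictly exceeds that of $\mathrm{ZF} + AD$, so invoking it here would also be circular. As written, your argument establishes $\mathrm{Con}(\mathrm{ZF} + AD) \rightarrow \mathrm{Con}(\mathrm{ZFC} + \text{one Woodin cardinal})$, which falls short of the stated equiconsistency. The known proof of the full result localizes the HOD computation: one applies a generation theorem (Koellner--Woodin) to a suitable increasing $\omega$-hierarchy of pointclasses inside $L(\mathbb{R})$, producing the Woodin cardinals from that hierarchy rather than from the Solovay sequence, and it is precisely this localization---not the single-Woodin HOD theorem---that carries the difficulty you correctly anticipate but misattribute.
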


\section{Belief in the consistency of large cardinals}

Why do set theorists believe that large cardinals are consistent? As we noted in the introduction, Gödel's Second Incompleteness theorem poses an insurmountable obstacle for the prospects of \textit{proving} (from ZFC) the consistency of any large cardinal property. Thus, if large cardinals are in fact consistent, this belief must be justified on the basis of philosophical argumentation as opposed to mathematical proof. Some, like Woodin, go so far as to say that the consistency of large cardinals constitutes a new type of mathematical knowledge.\footnote{Woodin is particularly confident in the consistency of the large cardinal bearing his name, predicting that there will be no discovery of a contradiction from the assumption of the existence of infinitely many Woodin cardinals anytime in the next 10,000 years, or indeed ever.} But regardless of one's stance on this, there is no doubt that large cardinals are indispensable tools in evaluating the strength of mathematical theories and are here to stay. Thus, we consider three lines of argumentation for their consistency.

\subsection{The argument from experience}

Some will contend that due to the vast experience set theorists have built up with large cardinals, it is unlikely any of the large cardinal hypotheses in the literature are inconsistent. The argument here is essentially that if there were a contradiction from, say, a measurable cardinal, then we would have discovered it in the decades set theorists have spent studying their properties.\footnote{Famously, Silver was an outlier among set theorists in that he believed measurable cardinals were inconsistent and spent the last two decades of his career at Berkeley attempting to derive a contradiction from their assumption. Other set theorists may have chided him for pursuing this line of investigation seriously, but while of course Silver did not succeed, he made many other important advances in the process.} This argument may seem to be naive, and for the most part, I think it is actually a misconception that this serves as a primary justification for large cardinals. But we perhaps can understand it better by analogy. We all believe that the Peano axioms of arithmetic are consistent because we have a clear picture in our heads of a model of them—namely, the natural numbers. For “small” large cardinals (those weaker than measurables and consistent with V=L), we have another natural model, namely L. That said, it is indisputable that as a mathematical object, L is far more complicated than $\mathbb{N}$, and certainly inner models of large cardinals at the level of measurability and beyond are still more complicated. But when set theorists make the argument from experience, their justification is based at least in part on their greater fluency with such models than ordinary mathematicians.

\subsection{The argument from truth}

Another argument for the consistency of large cardinals comes from the Platonist school of thought. One might think that large cardinals axioms are consistent because in fact they are \textit{true}; that is, large cardinals exist in some Platonic sense. While this may seem like an extreme view, again, it is not so far-fetched to say that our belief in the consistency of Peano arithmetic is due to the fact that we think the natural numbers ``really exist."\footnote{Unless, of course, you are a \textit{finitist} and doubt the existential status of mathematical infinity altogether.} Likewise L is a fairly specific and concrete mathematical object, and it is not terribly unreasonably to suppose it might have a Platonic manifestation. So perhaps large cardinals ``really exist" (or exist in their inner models) in some sense as well. But obviously, this stance requires a considerable leap of faith and is not necessarily the most tenable.\footnote{This is especially true for those large cardinals for which we have not solved the inner model problem.}

\subsection{The argument from structure theory}

The last line of argumentation we consider is what is sometimes referred to as ``structure theory" and can be approximately captured by the following statement of Woodin: ``In many cases, very different lines of investigation have led to problems whose degree of unsolvability is exactly calibrated by a notion of infinity." Structure theory provides the most promising avenue for justifying large cardinal properties, but we shall begin with an argument frequently associated with structure theory that is not actually all that compelling. Some contend that it is a remarkable and compelling empirical observation that all of the large cardinal properties we have elaborated thus far appear to be linearly ordered by consistency strength.\footnote{This is only an empirical observation, not a mathematical fact. In fact, without an agreed upon definition of what constitutes a (natural) large cardinal property, it would appear that prospects for formalizing this claim are limited.} This seems to reflect some kind of deep structure in the realm of mathematical truth and thus might be said to be a point in favor of their consistency, but that would be misguided. For one thing, Hamkins and Koellner both point out that a fundamental problem in the philosophy of mathematics generally is to explain why all ``naturally-arising" mathematical theories have the property of being linearly ordered by consistency strength—in other words, the situation is not unique to large cardinals. Second, Hamkins points out that \textit{in practice}, relative consistency proofs of large cardinal theories generally have the following form: from a model of a large cardinal of type A, we proceed by forcing or inner model methods to construct a model of a cardinal of type B. However, all of these methods preserve arithmetic truth, and for two theories to be incomparable, there must be instances of arithmetic statements that are true in one but not the other. Given this state of affairs, it should be viewed merely as a case of confirmation bias that the large cardinal properties being studied are linearly ordered by consistency strength—our standard methodology insists upon it.\footnote{As a final consideration, Hamkins also challenges the view that the large cardinal hierarchy even is well-ordered by consistency strength by using what he calls ``cautious enumerations" to construct instances of incomparable theories extending ZFC. Whether these theories satisfy the ``naturality" requirement is a loaded philosophical question, but beyond the scope of this paper.}

What are the compelling justifications for large cardinals arising out of structure theory then? The theory of determinacy provides a rich and robust source of results that have been shown to be neatly calibrated by large cardinal assumptions, through results like Theorem 7.4, restated here for the ease of the reader.

\begin{thm}
(Theorem 7.4 restated) The following theories are equiconsistent:
\begin{enumerate}
    \item ZFC + there exist infinitely many Woodin cardinals.
    \item ZF + AD
\end{enumerate}
\end{thm}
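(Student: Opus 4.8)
The plan is to establish the equiconsistency by proving the two implications between the consistency statements separately, since the directions rely on completely different machinery. The implication $Con(1) \Rightarrow Con(2)$ is the more tractable: it manufactures an explicit model of $ZF + AD$ out of a model of $ZFC$ with infinitely many Woodins. The implication $Con(2) \Rightarrow Con(1)$ is the genuinely surprising one, and I expect it to be the main obstacle; it recovers a full $ZFC$ universe carrying infinitely many Woodin cardinals from inside a choiceless world of determinacy.

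For $(1) \Rightarrow (2)$ I would deliberately \emph{not} route through Theorem 7.3, since that result assumes infinitely many Woodins together with a measurable above them — an assumption of strictly greater consistency strength than item (1) — so quoting it would overshoot and fail to deliver equiconsistency. The right tool is instead Woodin's derived model theorem. Starting in a model of $ZFC$ with Woodin cardinals $\delta_0 < \delta_1 < \cdots$, set $\lambda = \sup_n \delta_n$, a limit of Woodins. Passing to a symmetric collapse below $\lambda$ and forming the symmetric reals $\mathbb{R}^*$, one shows that the associated derived model satisfies $AD^+$, hence $AD$. This yields a model of $ZF + AD$ from only a limit of Woodins, with no measurable required, giving $Con(1) \Rightarrow Con(2)$.

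For $(2) \Rightarrow (1)$ the strategy is to extract Woodin cardinals from an inner model of a determinacy universe. First I would reduce to the canonical setting: granting $ZF + AD$, one checks that $L(\mathbb{R}) \models AD$ — a winning strategy for a set $A \in L(\mathbb{R})$ is coded by a real and so already lies in $L(\mathbb{R})$, where it remains winning by absoluteness — so it suffices to argue inside $L(\mathbb{R})$. The central input is then Woodin's theorem that $\mathrm{HOD}^{L(\mathbb{R})} \models ZFC$, with choice automatic because ordinal definability furnishes a definable wellordering even though the ambient $L(\mathbb{R})$ wellorders no copy of $\mathbb{R}$, and that $\Theta$, the supremum of the lengths of prewellorderings of $\mathbb{R}$, is a Woodin cardinal in $\mathrm{HOD}$.

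The main obstacle is upgrading a single Woodin in $\mathrm{HOD}$ to \emph{infinitely many}, which is exactly what item (1) demands and what makes the theories equiconsistent rather than merely comparable. Here one analyzes the Solovay sequence below $\Theta$ and shows, by a core-model-induction-style argument, that a refinement of the $\mathrm{HOD}$ analysis produces infinitely many Woodin cardinals in a suitable inner model. All of this rests on the full apparatus of descriptive set theory under determinacy — the Moschovakis coding lemma, the theory of scales and of the projective-like pointclasses, and the periodicity theorems — which is precisely why this direction, and not the derived-model direction, is the hard heart of Woodin's theorem. Marshalling these ingredients, rather than any single clever trick, is where the real difficulty lies.
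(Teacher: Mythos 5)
Your proposal cannot be compared against a proof in the paper, because the paper offers none: Theorem 7.4 is stated both times as a pure citation of Woodin's work. Judged against the literature, your direction $\mathrm{Con}(1)\Rightarrow \mathrm{Con}(2)$ is set up correctly: you are right that routing through Theorem 7.3 would overshoot (it assumes a measurable above the Woodin cardinals), and Woodin's derived model theorem applied at $\lambda=\sup_n\delta_n$ is indeed the standard way to extract a model of ZF + AD from exactly the hypothesis of (1). In the direction $\mathrm{Con}(2)\Rightarrow \mathrm{Con}(1)$, the reduction to $L(\mathbb{R})$ and the appeal to Woodin's theorem that $\mathrm{HOD}^{L(\mathbb{R})}$ is a model of ZFC in which $\Theta$ is a Woodin cardinal are also sound.

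The genuine gap is at exactly the step you flag as the main obstacle, and the tool you propose for it provably cannot work. Once you have reduced to $L(\mathbb{R})$, there is no ``Solovay sequence below $\Theta$'' to analyze: under AD $+\ V=L(\mathbb{R})$, every set of reals is ordinal definable from a real, and by amalgamating, over all reals $x$, the least $\mathrm{OD}(x)$ surjection of $\mathbb{R}$ onto a given $\alpha<\Theta$ (the class of $\mathrm{OD}(x)$ sets carries a canonical wellordering, so no choice is needed), one obtains an $\mathrm{OD}$ surjection onto $\alpha$. Hence $\theta_0=\Theta$ and the Solovay sequence of $L(\mathbb{R})$ has length one. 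The theorem that every member of the Solovay sequence is Woodin in $\mathrm{HOD}$ is genuinely useful in models of $\mathrm{AD}_{\mathbb{R}}$, where the sequence is long, but in $L(\mathbb{R})$ it returns only the single Woodin $\Theta$ you already had. The known proofs instead run Woodin's Generation Theorem along a strictly increasing $\omega$-tower of pointclasses inside $L(\mathbb{R})$ (equivalently, suitable levels of the $L_\alpha(\mathbb{R})$ hierarchy and their local $\Theta$'s), producing \emph{one} inner model in which all of the associated local $\Theta$'s are Woodin. That ``one model'' requirement matters for a second reason: even if you could produce, for each $n$, some inner model with $n$ Woodin cardinals, you would not be done, since ``there exist infinitely many Woodin cardinals'' is a single sentence rather than a schema; a compactness argument from the per-$n$ consistencies only yields a model whose class of Woodin cardinals may be internally finite of nonstandard size. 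You need a single model containing a sequence of Woodins that it recognizes as an $\omega$-sequence, and your sketch, as written, does not provide one.
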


Gödel's construction of L established half of the relative consistency of the axiom of choice; in particular that $Con(ZF)\vdash Con(ZFC)$. And since Cohen's method of forcing can be used to show $Con(ZF)\vdash Con(ZF + \neg AC)$, this means that $AC$ and $\neg AC$ are equiconsistent hypotheses. It could have turned out that AD was just another characterization of $\neg AC$. But what the above theorem should be seen as doing is making it clear just how much stronger of a hypothesis AD is than AC. You need AC to contradict AD (with ZF as the ambient theory) \textit{just in case} it is consistent for there to exist infinitely many Woodin cardinals.\footnote{A fun corollary is that if you can prove Theorem 7.2 without invoking AC, you will have proven that the assumption of infinitely many Woodin cardinals is inconsistent.} The concept of a Woodin cardinal did not emerge for this purpose, so it is remarkable that we know how much stronger AD is at all, and it is even more remarkable that this degree of difference can be pinned down by large cardinals. This speaks to their utility in gauging mathematical power across conceptual domains, which may be viewed as a type of extrinsic justification for their consistency.

It would be astonishing if the theory of determinacy, developed largely independently of large cardinal considerations, all turned out to be vacuous. Indeed, it would nullify decades of some of the most involved mathematical investigation in all of human history. Why else would this extraordinarily intricate theory ``be there" if it were not consistent? 

The process of justifying any new proposed axiom, including large cardinal axioms, must rest on both intrinsic and extrinsic considerations, which can be thought of as corresponding to ``plausibility" and ``fruitfulness," respectively.\footnote{See Maddy's [18] for a thorough treatment of this idea.} Once we are comfortable with the idea that there is an infinite hierarchy of infinities, it does not seem that on the basis of their vast size alone could large cardinal axioms be said to violate intrinsic plausibility considerations. And we have already considered reasons why large cardinal axioms may be extrinsically justified. One more consideration in this favor is that forcing \textit{from} certain enormous large cardinals has proven to be a very useful technique in practice to set theorists.

On the other hand, one should be slightly cautious about using utility as a basis for belief in consistency. There is no doubt that large cardinal axioms are useful in proving things (as Scott said, ``if you want more, you have to assume more"). But if certain ones are inconsistent, then this should not be surprising, because then these axioms are useful without bounds by the principle of explosion. The naysayer can never be put entirely to rest.

A final thought: even if we believe large cardinals are consistent and thus (as a corollary of Gödel's Completeness theorem) believe that there exists \textit{some} model, it would be nice to be able to point to a specific canonical model. This is why the Inner Model Program, spearheaded by Woodin, is crucially important. This is discussed more in the final section, but the relevant point here is that it is potentially much more reasonable to believe in the consistency of those large cardinals for which the inner model problem has been solved than those for which it has not.\footnote{Of course, solving the inner model problem for some large cardinal is not a proof of its consistency because it may be argued that, as with V, our conception of such models and what makes them canonical is still inherently vague. Furthermore, formally all this establishes is that the consistency of that type of cardinal in V implies the consistency of that type of cardinal in the inner model. But in practice, the general pattern to inner model constructions performed by set theorists is fairly well-established and ameliorates some of these canonicity concerns.}

\section{The prospects for the Continuum Hypothesis (Or, why taking the Ultimate-L would be a W)}

Recall that CH says $2^{\aleph_0} = \aleph_1$ and GCH says $\delta^+ = 2^\delta$, while large cardinal axioms postulate the existence of enormous infinite sets.\footnote{Thankfully ``enormous," unlike ``huge" or ``large," is not a reserved word in this context.} Despite all the fuss we have made over large cardinal axioms and their mathematical strength, no large cardinal axiom has been shown to resolve CH in one way or another. In fact, by a theorem of Levy and Solovay, it is provably the case that (in a certain precise sense) no large cardinal axiom could resolve CH.\footnote{This is because CH is a $\Sigma_1^2$ statement, which by the Levy-Solovay theorem are not $\Omega$-complete, meaning that they can always have their truth-value altered by forcing.} To resolve it, we must look for an entirely new kind of axiom.

Before we can consider possible resolutions to CH, we briefly acknowledge two competing conceptions of set theory that we alluded to earlier. On the ``universe" view, there is a unique absolute background conception of set, and every statement like CH has a definitive truth value in it. On the ``multiverse" view, advocated by Hamkins, there are many distinct conceptions of set, each instantiated in one of many different set-theoretic universes which exhibit varying truth values for certain propositions. This is a form of higher-order Platonic realism about universes, but even on this view we may still prefer certain universes over others. Hamkins contends that due to our vast experience working in those models of set theory where CH is false, any ``dream solution" proposing a pre-reflective principle that would resolve CH in one direction or the other is untenable. Hamkins acknowledges that Woodin's current research program and corresponding solution template for resolving CH affirmatively (which we discuss below) is different from the others, but Hamkins argues that, as attractive as Woodin's conjectured CH paradise may be, it cannot just instantly render our experience in models of $\neg CH$ as confused and misguided.

It is entirely possible that Hamkin's position is the right one to hold, or that Woodin's program does not contradict it outright. But even Hamkins acknowledges that we may have reasons for preferring certain universes over others, and we can view the goal of Woodin's program as merely to paint a particularly elegant picture of the universe. Therefore, we shall proceed to consider a possible path forward that Woodin and proponents of CH would deem a resounding win for set theory.

To do so, we must discuss, at only a very high level, the general pattern of inner model construction. Recall Scott's Theorem: if V=L, then there does not exist a measurable cardinal. When set theorists construct inner models of large cardinals, they are seeking to find generalizations of L (using notions intermediary between the restrictive ``definable" powerset and the recklessly-behaved full powerset) that are compatible with ``large" large cardinals. This program has been remarkably successful at the level of measurable cardinals, Woodin cardinals, infinitely many Woodin cardinals, and beyond. But what occurs in practice (based on the current standard techniques) is that inner model construction occurs in a piecewise inductive manner: the inner model of a single cardinal of type A will not in general be an inner model of any stronger cardinal, nor even an inner model of two cardinals of type A. Thus, solving the inner model problem for, or deriving an inconsistency from, any large cardinal hypotheses would typically require solving the inner model problem at \textit{all} lower levels of the consistency strength hierarchy first. Or so was the case, until quite recently...

The extent to which the most spectacular results in modern set theory can be ascribed to one individual, Hugh Woodin, is truly mind-boggling. What Woodin recently showed (and which we shall not even hope to make precise) is the following: if the inner model program can be extended to the level of one supercompact cardinal, then it will essentially immediately subsume every stronger large cardinal hypothesis for ``free." Stated more cleanly, unlike how measurable cardinals cannot exist in L, it would be consistent for all known large cardinals to exist in the inner model of a supercompact cardinal (assuming they are consistent in the first place). Thus, a central focus of set theorists has become to solve the inner model program for one supercompact cardinal. If this can be done, the first bewildering consequence is that it will establish Reinhardt cardinals are inconsistent with ZF, resolving a problem that has been open since Kunen's gave his original inconsistency proof. The second remarkable consequence concerns the properties of the (conjectured) model itself, which has been termed ``Ultimate-L." As noted, Ultimate-L would be \textit{immune} to analogues of Scott's theorem and thus compatible with all known large cardinal axioms. But furthermore, Ultimate-L would also be \textit{immune} to independence by forcing, meaning in particular that it would be a model of CH.\footnote{Furthermore, GCH would hold up at least through some high level of the large cardinal hierarchy, though it is unknown whether Ultimate-L would be a model of GCH in its entirety.}

The hypothesis of Ultimate-L's existence can be made into a specific combinatorial statement. Actually, what we give below would be a corollary of the Ultimate-L conjecture, but one strong enough to still prove the inconsistency of Reinhardt cardinals with ZF.\footnote{As a technical quibble, we should note that the Ultimate-L conjecture is not merely the statement that the model Ultimate-L exists, though it may as well be taken as such for the purposes of our exposition.}

\begin{defn}
\textbf{HOD} is the proper class of all \textit{hereditarily ordinal definable} sets.
\end{defn}

That is, HOD consists of those sets that can be defined using ordinal parameters ``all the way down." A superset of L that also satisfies AC, HOD is a rich inner model. However, HOD's properties are far less understood than L's, and it is more plausible for HOD to be close to V.

\begin{thm}
Assume the Ultimate-L conjecture. Then the collection of regular cardinals which are not measurable cardinals in HOD constitute a proper class.
\end{thm}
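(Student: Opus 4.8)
The plan is to recognize this statement as essentially the content of Woodin's \textbf{HOD Conjecture} and to extract it from the Ultimate-L conjecture by way of the \textbf{HOD Dichotomy Theorem}. The strategy has two moves. First I would invoke the dichotomy, which divides the behavior of HOD above a large enough cardinal into a pathological ``far'' branch, in which \emph{every} sufficiently large regular cardinal is measurable in HOD, and a well-behaved ``close'' branch, in which HOD correctly computes the cardinal structure of $V$. Second, I would use the Ultimate-L conjecture to eliminate the far branch, so that we land in the close branch, where the regular cardinals failing to be measurable in HOD are seen to form a proper class.

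To set up the dichotomy, I would work in the presence of an extendible cardinal $\delta$, which is part of the large-cardinal setting presupposed by the Ultimate-L program (recall that every extendible cardinal is supercompact). Woodin's HOD Dichotomy Theorem then asserts that exactly one of the following holds: either (I) every regular cardinal $\kappa \geq \delta$ is measurable in HOD, or (II) every singular cardinal $\gamma > \delta$ remains singular in HOD and satisfies $(\gamma^+)^{\mathrm{HOD}} = \gamma^+$. Case (I) says HOD is ``far'' from $V$; case (II) says HOD is ``close'' to $V$. I would cite this theorem as a black box, its proof being far beyond our scope.

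The heart of the argument is to show that the Ultimate-L conjecture forces case (II). The conjecture furnishes a fine-structural inner model --- Ultimate-L, a weak extender model for the supercompactness of $\delta$ --- that is close to $V$ and absorbs all known large cardinals. The existence of such a model is precisely what places HOD on the close side of the dichotomy: it verifies case (II) and thereby excludes the pathological case (I), in which all regular cardinals above $\delta$ would be measurable in HOD.

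Finally, I would harvest the conclusion from case (II). Since measurable cardinals are inaccessible, hence limit cardinals, no successor cardinal can be measurable in any model; in particular, for each singular $\gamma > \delta$ the cardinal $(\gamma^+)^{\mathrm{HOD}} = \gamma^+$ is a successor cardinal of HOD and so is not measurable in HOD, while being a genuine regular cardinal of $V$. As the singular cardinals above $\delta$ form a proper class, so do these witnesses, yielding the desired proper class of regular cardinals that are not measurable in HOD. The main obstacle is concentrated entirely in the two cited results --- the HOD Dichotomy Theorem and the fact that the Ultimate-L conjecture places HOD on the close side of it --- both monumental theorems of Woodin that one can only realistically quote here. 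The one step genuinely within reach is the closing observation that case (II) delivers a \emph{proper class}, and not merely a set, of non-measurable regular cardinals, which holds because the correctly computed successors occur cofinally in the ordinals.
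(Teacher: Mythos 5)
The paper states this theorem without proof: it is quoted as a deep corollary of the Ultimate-L conjecture due to Woodin (the paper's surrounding text even flags it as ``a corollary of the Ultimate-L conjecture'' given as a black box), so there is no internal argument to compare yours against, and your proposal can only be judged on its own terms. On those terms it is a faithful reconstruction of the standard route in the literature, and it is correctly assembled. Your two black boxes are genuine theorems of Woodin: the HOD Dichotomy for an extendible cardinal $\delta$, and the fact that the Ultimate-L conjecture places HOD on the close side. On the latter, the point worth making explicit is that the Ultimate-L conjecture requires the weak extender model $N$ to satisfy $N \subseteq \mathrm{HOD}$, and closeness then transfers upward by a squeezing argument: since $N \subseteq \mathrm{HOD} \subseteq V$, every $V$-cardinal is a HOD-cardinal and every HOD-cardinal is an $N$-cardinal, so $(\gamma^+)^{N} \leq (\gamma^+)^{\mathrm{HOD}} \leq \gamma^+$, and $(\gamma^+)^{N} = \gamma^+$ forces equality throughout; likewise a short cofinal map witnessing singularity in $N$ already lies in HOD. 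Your closing step --- the one part you carry out in full --- is correct: $\gamma$ remains a cardinal of HOD by downward absoluteness, so $(\gamma^+)^{\mathrm{HOD}} = \gamma^+$ is a \emph{successor} cardinal of HOD and hence not measurable there (HOD satisfies AC, so its measurables are inaccessible), while $\gamma^+$ is regular in $V$; since the singular cardinals above $\delta$ form a proper class, so do these witnesses. One terminological caveat: the official HOD Dichotomy and HOD Conjecture are phrased with ``$\omega$-strongly measurable in HOD,'' a notion stronger than measurability in HOD; since $\omega$-strong measurability implies measurability, the version you (and the paper) state does follow, but you should quote the dichotomy in its correct form.
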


If the Ultimate-L conjecture can be proven true, we would have a natural candidate axiom, V=Ultimate-L, which would be extraordinarily more appealing than the naive Axiom of Constructibility because it would preserve the rich landscape of large cardinals.

But the problems don't entirely go away. In a future where the Ultimate-L conjecture is ultimately proven true, there will actually be multiple natural candidate models with the two desired immunity properties. Which means that unless something else remarkable happens, adjudication will still be in order—and the philosophy of set theory will remain a flourishing discipline.

\section*{Acknowledgments}  First and foremost, I would like to thank Professor Jonathan Weinstein for his excellent mentorship this summer and over the prior semester. He has truly gone above and beyond, dedicating an extraordinary amount of time and effort to helping me learn about set theory. Any clarity this paper provides about large cardinals ought to be attributed to him, and any deficiencies attributed to my inability to properly distill his insights. I also wish to thank Professor Renato Feres for organizing the Freiwald scholars seminar over the same time period and for giving me the opportunity to participate. Professor Karl Schaefer suggested the theme of the paper, and Professors Nic Koziolek and Brendan Juba nurtured my love for the philosophical aspects of the Continuum Hypothesis and mathematical logic in the first place. Finally, Lucas Strammello helped me through a semester each of set theory and theory of computation, explaining concepts whenever I found myself lost (which was quite frequently). I am grateful for the support I have received from so many different places.

\end{document}